\newcommand{\C}{\mathbb{C}}
\newcommand{\R}{\mathbb{R}}
\newcommand{\N}{\mathbb{N}}
\newcommand{\cat}{^\frown}
\newcommand{\dom}{\operatorname{dom}}
\newcommand{\ran}{\operatorname{ran}}
\renewcommand{\Re}{\operatorname{Re}}
\renewcommand{\Im}{\operatorname{Im}}
\newcommand{\norm}[1]{\left\| #1 \right\|}
\newcommand{\B}{\mathcal{B}}
\newcommand{\zerovec}{\mathbf{0}}
\newcommand{\F}{\mathbb{F}}
\newcommand{\bfv}{\mathbf{v}}
\newcommand{\LB}{\mathcal{L}_{\operatorname{Banach}}}
\newcommand{\dsym}{\underline{d}}
\newcommand{\plus}{\bigoplus}
\newcommand{\LBL}{\mathcal{L}_{\operatorname{B-Lattice}}}
\newcommand{\meet}{\wedge}
\newcommand{\join}{\vee}
\newcommand{\TBL}{T_{\operatorname{B-Lattice}}}
\newcommand{\Ind}{\mathbf{1}}
\newcommand{\TLpR}{T_{L^p(\R)}}
\newcommand{\LLpC}{\mathcal{L}_{L^p(\C)}}
\newcommand{\TLpC}{T_{L^p(\C)}}
\theoremstyle{theorem}
\newtheorem{theorem}{Theorem}[section]
\newtheorem{lemma}[theorem]{Lemma}
\theoremstyle{definition}
\newtheorem{definition}[theorem]{Definition}
\theoremstyle{theorem}
\theoremstyle{theorem}
\theoremstyle{theorem}
\theoremstyle{theorem}
\theoremstyle{definition}
\theoremstyle{theorem}
\numberwithin{equation}{section}
\begin{document}
\title{Continuous logic and embeddings of Lebesgue spaces}
\author{Timothy H. McNicholl}
\address{Department of Mathematics\\
Iowa State University\\
Ames, Iowa 50011}
\email{mcnichol@iastate.edu}

\begin{abstract}
We use the compactness theorem of continuous logic to give a new proof that 
$L^r([0,1]; \R)$ isometrically embeds into $L^p([0,1]; \R)$ whenever $1 \leq p \leq r \leq 2$.  
We will also give a proof for the complex case.  This will involve a new characterization of complex $L^p$ spaces based on Banach lattices.
\end{abstract}
\subjclass[2010]{03C99,46B04,46B42}
\maketitle

\section{Introduction}\label{sec:intro}

Let $\F$ denote either the field of real numbers or the field of complex numbers.  When $\Omega = (X, \mathcal{M}, \mu)$ is a measure space and $1 \leq p < \infty$, $L^p(\Omega;\F)$ denotes the set of all measurable $f : X \rightarrow \F$ with finite $p$-norm.  If $\mu$ is the counting measure on $\N$, we write $\ell^p(\F)$ for $L^p(\Omega; \F)$.  
We write $L^p(\F)$ for the class of all Banach spaces of the form $L^p(\Omega; \F)$ and $L^p$ for $L^p(\R) \cup L^p(\F)$.

When $\B_0$ and $\B_1$ are Banach spaces over $\F$, $\B_0$ is said to \emph{isometrically embed} into $\B_1$ if there is a linear isometric map $T : \B_0 \rightarrow \B_1$ (i.e. a linear map that preserves the norm).  For example, it is easy to show that $\ell^p(\F)$ isometrically embeds into $L^p([0,1];\F)$.  The question as to when does an $L^p(\F)$ space isometrically embed into an $L^r(\F)$ space for $r \neq p$ goes back to S. Banach's foundational treatise \emph{Theorie des Operations Lineaires} \cite{Banach.1987}.  In
that seminal work, Banach showed that $\ell^r(\F)$ does not isometrically embed into 
$L^p([0,1]; \F)$ if either $2 < p < r$ or $r < p < 2$.  Thus, $L^r([0,1]; \F)$ does not isometrically embed into $L^p([0,1]; \F)$ in these cases either.   In 1936, R. Paley showed that $\ell^r(\F)$ does not isometrically embed into $(L^p[0,1]; \F)$ 
if either $r,p$ are on opposite sides of $2$ or if $2 < r < p$ \cite{Paley.1936}.  These results were published posthumously, and the manuscript was assembled from Paley's notes by the editor F.J. Murray.  Murray noted that the proof in the remaining case $p < r < 2$ could not be reconstructed from Paley's notes.  

So, it was fairly surprising when in 1965, Bretagnolle, Dacunha-Castelle, and Krivine showed the following \cite{Bretagnolle.Dacunha-Castelle.Krivine.1965}.  

\begin{theorem}\label{thm:BDCK}
Suppose $1 \leq p \leq r \leq 2$.  Then, $L^r([0,1]; \R)$ isometrically embeds into $L^p([0,1]; \R)$.
\end{theorem}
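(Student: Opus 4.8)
The plan is to prove this via the classical route using stable random variables, then observe how continuous logic streamlines the model-theoretic packaging. First I would recall the notion of a symmetric $r$-stable random variable $\theta_r$ for $0 < r \leq 2$: its characteristic function is $\mathbb{E}[e^{it\theta_r}] = e^{-|t|^r}$. The crucial analytic fact is that for $1 \leq p < r \leq 2$, the $p$-th absolute moment $\mathbb{E}[|\theta_r|^p]$ is finite, so that $\theta_r \in L^p(\Omega;\R)$ for the underlying probability space $\Omega$. I would then let $(\theta_r^{(n)})_{n \in \N}$ be an i.i.d.\ sequence of copies of $\theta_r$ on a product probability space, and define a candidate map $T$ on the dense subspace of finitely-supported sequences in $\ell^r(\R)$ by $T(e_n) = c \cdot \theta_r^{(n)}$, where $c = \mathbb{E}[|\theta_r|^p]^{-1/p}$ is a normalizing constant.

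The key step is the isometry computation. By independence and the defining property of stable laws, for any scalars $a_1,\dots,a_N$ the random variable $\sum_{n=1}^N a_n \theta_r^{(n)}$ has the same distribution as $\left(\sum_{n=1}^N |a_n|^r\right)^{1/r} \theta_r$. Taking $p$-norms gives
\[
\left\| T\left(\textstyle\sum_n a_n e_n\right)\right\|_{L^p}^p
= c^p\, \mathbb{E}\left[\left|\textstyle\sum_n a_n \theta_r^{(n)}\right|^p\right]
= c^p \left(\textstyle\sum_n |a_n|^r\right)^{p/r} \mathbb{E}[|\theta_r|^p]
= \left(\textstyle\sum_n |a_n|^r\right)^{p/r},
\]
so $\|T(x)\|_{L^p} = \|x\|_{\ell^r}$ on the dense subspace, and $T$ extends to a linear isometry $\ell^r(\R) \to L^p(\Omega;\R)$. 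Since $L^p(\Omega;\R)$ for a probability space $\Omega$ is itself isometric to $L^p([0,1];\R)$ by standard measure isomorphism results, this already yields $\ell^r(\R) \hookrightarrow L^p([0,1];\R)$. To upgrade $\ell^r$ to $L^r([0,1];\R)$, I would pass through a compactness/ultraproduct argument: $L^r([0,1];\R)$ embeds isometrically into an ultrapower of $\ell^r(\R)$ (via finite-dimensional $\ell^r_N$ approximations of simple functions), and the continuous-logic compactness theorem lets one transfer the existence of the isometric embedding from the $\ell^r$-level to the $L^r$-level while staying inside the class of $L^p$ spaces, since that class is axiomatizable and closed under ultraproducts.

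The main obstacle I anticipate is not the stable-variable computation — that is classical — but rather the model-theoretic bookkeeping: setting up a continuous first-order language in which ``being (isometric to) an $L^p$ space'' is an elementary (or at least $\forall\exists$-axiomatizable) property, verifying that ultraproducts of $L^p$ spaces remain $L^p$ spaces, and formulating the statement ``there is an isometric embedding of $L^r$ into me'' as a set of conditions whose finite satisfiability reduces to the $\ell^r_N \hookrightarrow L^p$ case already established. One must be careful that the embedding produced by compactness lands in a genuine $L^p$ space and not merely in an abstract model of the theory; this is exactly where the closure of the axiomatized class under ultraproducts (and the representation theorem identifying its members) does the work. I would expect the integrability threshold $p \leq r \leq 2$ to enter in precisely two places: $r \leq 2$ for the stable law $\theta_r$ to exist, and $p < r$ (with the boundary case $p = r$ trivial) for $\mathbb{E}[|\theta_r|^p] < \infty$.
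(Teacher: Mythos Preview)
Your proposal is correct in outline and shares the paper's two-stage architecture: first embed $\ell^r$ into $L^p([0,1])$ via symmetric $r$-stable random variables (this is exactly the paper's Theorem~\ref{thm:lr.Lp}, and your computation matches the paper's review of it), then upgrade to $L^r([0,1])$ by a model-theoretic compactness argument combined with Kakutani's representation theorem and a separability reduction. The difference lies entirely in how the upgrade is executed. You gesture at ultrapowers and at ``formulating the statement `there is an isometric embedding of $L^r$ into me' as a set of conditions whose finite satisfiability reduces to the $\ell^r_N \hookrightarrow L^p$ case,'' but leave the mechanism unspecified; in particular, an arbitrary finite tuple from a dense subset of $L^r([0,1])$ need not span a copy of $\ell^r_N$, so the reduction you claim requires a careful choice of constants. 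The paper makes precisely this choice: it adds constants $c_\sigma$ indexed by the binary tree $\{0,1\}^*$ and writes sentences asserting that $\sigma\mapsto c_\sigma$ is an \emph{$L^r$-formal disintegration} (summative, with incomparable nodes $L^r$-formally disjointly supported, and $\norm{c_\sigma}=2^{-|\sigma|/r}$). Finite satisfiability of each $T_n$ then reduces cleanly to finding $2^n$ vectors in $L^p$ behaving like an $\ell^r_{2^n}$ basis---exactly what the stable-variable embedding supplies (Lemma~\ref{lm:Tn.sat}). A separate lifting theorem (Theorem~\ref{thm:lifting}) then identifies the closed span of the $c_\sigma$ in any model with $L^r([0,1])$ by matching it against the dyadic-interval disintegration. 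This device is the paper's main technical contribution and is what you are missing; it buys a concrete, constructive argument with no appeal to the axiom of choice. Your ultrapower route also works---it is essentially the original Bretagnolle--Dacunha-Castelle--Krivine proof---but the paper's explicit aim is to replace that step, so invoking ultrapowers here runs counter to the point of the exercise.
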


Their proof first skillfully employs a beautiful application of probability theory, namely the theory of stable random variables, to build an explicit embedding of $\ell^r(\R)$ into $L^p([0,1]; \R)$.  It then uses an unexpected application of ultraproducts to obtain an embedding of $L^r([0,1]; \R)$ into $L^p([0,1]; \R)$.  According to W. Henson, this was the first application of ultraproducts in functional analysis \cite{Henson.Iovino.2002}.  A more constructive but more involved proof via Poisson processes later appeared in the monograph by Lindenstrauss and Tzafriri \cite{Lindenstrauss.Tzafriri.1979}.   Both proofs rely on the theory of abstract real $L^p$ spaces.

The complex version of Theorem \ref{thm:BDCK} was proven by Herz \cite{Herz.1971} and later by Rosenthal \cite{Rosenthal.1973}.

Ultraproducts originated in mathematical logic.  Here, we will give another proof of Theorem \ref{thm:BDCK} via another tool from logic, namely the compactness theorem of continuous logic.  Continuous logic is, roughly speaking, model theory for continuous structures such as Banach spaces.  We will also prove the complex version of Theorem \ref{thm:BDCK}.  Thus, our main theorem is the following.

\begin{theorem}\label{thm:main}
Suppose $1 \leq p \leq r \leq 2$.  Then, $L^r([0,1]; \F)$ isometrically embeds into $L^p([0,1]; \F)$.
\end{theorem}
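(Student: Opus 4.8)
The plan is to follow the two-stage strategy of Bretagnolle--Dacunha-Castelle--Krivine, but to replace their ultraproduct argument with an appeal to the compactness theorem of continuous logic, and to handle the complex case via a Banach-lattice characterization of complex $L^p$ spaces (so that the real and complex cases can be treated almost uniformly). First I would set up the continuous first-order language $\LLpF$ appropriate for talking about an $L^p(\F)$ space together with a distinguished embedded copy of $L^r([0,1];\F)$: sorts for the two Banach spaces (bounded by balls of each radius, with moduli of uniform continuity for addition and scalar multiplication), a function symbol for the embedding $T$, and the norm as a real-valued predicate. I would then write down the theory $\TLpF$ whose models are exactly pairs $(\mathcal{B}, T)$ with $\mathcal{B}\in L^p(\F)$ (or in the lattice formulation, a complex Banach lattice of the right type) and $T$ a linear isometry from $L^r([0,1];\F)$ into $\mathcal{B}$; the key point is that ``$\mathcal{B}$ is an abstract $L^p$ space'' is axiomatizable in continuous logic, e.g. via the $p$-additivity of the norm on disjoint elements together with the lattice axioms, and likewise ``$T$ is a linear isometry onto its range'' is a countable set of closed conditions.

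Next I would reduce Theorem~\ref{thm:main} to a finitary statement via compactness. By the compactness theorem, $\TLpF$ has a model provided every finite subset does; and a finite subset of $\TLpF$ mentions only finitely many elements of $L^r([0,1];\F)$ and imposes only finitely many approximate-isometry constraints. Since $L^r([0,1];\F)$ is separable, a routine density argument shows it suffices to find, for each $\varepsilon>0$ and each finite-dimensional subspace $E$ of $L^r([0,1];\F)$, an almost-isometric embedding of $E$ into some $L^p(\F)$ space. Because finite-dimensional subspaces of $L^r([0,1];\F)$ are built from step functions, and step functions span copies of $\ell^r_n(\F)$, this in turn reduces to: \emph{for every $n$ and every $\varepsilon>0$, $\ell^r_n(\F)$ embeds $(1+\varepsilon)$-isometrically into some $L^p(\F)$ space.} This last fact is exactly what the probabilistic half of the BDCK argument delivers: using symmetric $r$-stable random variables $X_1,\dots,X_n$ on $([0,1],\mu)$, the map $e_k\mapsto X_k$ sends $\ell^r_n(\R)$ isometrically into $L^p([0,1];\R)$ because $\|\sum a_k X_k\|_p = c_{p,r}\,(\sum|a_k|^r)^{1/r}$; for the complex case one takes complex symmetric $r$-stable variables and uses the lattice characterization to recognize the target as a complex $L^p$ space.

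I would then close the loop: compactness produces a (generally nonseparable) model $(\mathcal{B},T)$ where $\mathcal{B}$ is an abstract $L^p(\F)$ space and $T:L^r([0,1];\F)\to\mathcal{B}$ is a genuine linear isometry. By the Bohnenblust--Kakutani--Nakano representation of abstract $L^p$ spaces (in the complex case, via the new Banach-lattice characterization the paper promises), $\mathcal{B}$ is isometrically $L^p(\Omega;\F)$ for some measure space $\Omega$; and since $T$ has separable range, that range lies isometrically inside $L^p([0,1];\F)$ (every separable subspace of an $L^p$ space embeds isometrically into $L^p[0,1]$). Composing gives the desired isometric embedding of $L^r([0,1];\F)$ into $L^p([0,1];\F)$.

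The main obstacle I anticipate is not the compactness bookkeeping but the axiomatizability claims and, in the complex case, the representation theorem. One must verify carefully that ``$\mathcal{B}$ is an abstract $L^p$ space'' and ``$\mathcal{B}$ is a complex $L^p$ space'' are expressible as (approximately finitely satisfiable) sets of closed conditions in continuous logic with the right moduli of uniform continuity — this is where the promised Banach-lattice characterization of complex $L^p$ spaces does the real work, since the complex $L^p$ norm is not itself a lattice norm and must be recovered from lattice data. Getting the continuous-logic formalization right, so that the compactness theorem applies and its output is genuinely an $L^p$ space rather than merely something satisfying finitely many of the axioms, is the delicate step; the probabilistic input about stable variables, while essential, is classical and I would simply cite or reprove it as a lemma.
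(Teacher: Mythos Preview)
Your high-level strategy coincides with the paper's: verify finite satisfiability using the stable-variable embedding $\ell^r\hookrightarrow L^p([0,1];\F)$, apply compactness, invoke Kakutani (and, in the complex case, the new abstract-complex-$L^p$ representation), then pass to a separable sub-measure-space to land in $L^p([0,1];\F)$.  Where you diverge is in the continuous-logic encoding.  You propose a two-sorted language carrying $L^r([0,1];\F)$ as a separate sort together with a function symbol for the embedding, and you reduce finite satisfiability to $(1+\varepsilon)$-embeddings of arbitrary finite-dimensional subspaces of $L^r$.  The paper stays single-sorted: it adjoins only constants $c_\sigma$ indexed by the dyadic tree $\{0,1\}^*$, with sentences asserting that $\sigma\mapsto c_\sigma$ is an \emph{$L^r$-formal disintegration} (summative, $L^r$-formally separating, $\norm{c_\sigma}=2^{-|\sigma|/r}$).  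The finite fragment $T_n$ mentions only $|\sigma|\le n$ and is satisfied \emph{exactly} by interpreting the level-$n$ leaves as $2^{-n/r}f_j$, where the $f_j$ are the images of the standard $\ell^r$ basis under the stable-variable embedding; no $\varepsilon$-slack is needed.  After compactness, the paper identifies the closed linear span of the $c_\sigma$ with $L^r([0,1];\R)$ not by a generic density argument but by a tailor-made lifting theorem: any norm-preserving tree-isomorphism between two $L^r$-formal disintegrations extends uniquely to a surjective linear isometry of the generated spaces.  Your route is sound (the approximate form of compactness handles the $\varepsilon$, and pinning down a dense set of constants forces the second sort to contain a copy of $L^r[0,1]$), but the second sort is ultimately redundant and the reduction to $\ell^r_n$ requires an extra approximation layer; the paper's dyadic encoding buys exact finite satisfiability and a clean identification of the embedded copy, at the cost of developing the disintegration machinery.
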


As we will discuss in Section \ref{sec:conclusion}, the resulting proof is constructive and avoids the use of ultraproducts or other forms of the axiom of choice.  We also believe it is more direct than the proof in \cite{Lindenstrauss.Tzafriri.1979}.  

We aim for accessibility to the communities of functional analysis and mathematical logic.  Accordingly, the paper is organized as follows.  In Section \ref{sec:bak.prelim.fa} we first cover the minimal background required from functional analysis beyond what is normally covered in standard graduate courses.  We then introduce the concepts of \emph{$L^p$-formally disjointly supported} vectors and \emph{$L^p$-formal disinintegrations} which underpin our use of the compactness theorem from continuous logic.  We also set forth what appears to be a new definition of abstract complex $L^p$ spaces and show 
that this notion indeed characterizes complex $L^p$ spaces.  In Section \ref{sec:bak.prelim.CL}, we begin by summarizing the essentials of continuous logic and then review the representation of Banach lattices and abstract real $L^p$-spaces in this framework.  We then show that our class of abstract complex $L^p$ spaces can be represented in continuous logic as well.  

Our proof of Theorem \ref{thm:main} is then given in Section \ref{sec:proof}.  A few concluding remarks are given in Section \ref{sec:conclusion}.  In an appendix, we give a proof of a relevant non-supporting result in Banach lattice theory which is well-known although its proof does not seem to be explicitly recorded in the literature. 

\section{Background and preliminaries from functional analysis}\label{sec:bak.prelim.fa}

\subsection{Background from functional analysis}\label{sec:bak.prelim.fa::subsec:bak}

If $\mathcal{B}$ is a Banach space, and if $X \subseteq \mathcal{B}$, we write $\langle X \rangle$ for the closed subspace of $\mathcal{B}$ generated by $X$.

We first cover required inclusion and embedding results.  We then deal with Banach lattices and abstract real $L^p$ spaces.  

\subsubsection{Inclusion and embedding results}

Recall that a measure space $\Omega = (X, \mathcal{M}, \mu)$ is \emph{separable} if there is a countable $\mathcal{D} \subseteq \mathcal{M}$ so that whenever $\mu(A) < \infty$ and $\epsilon >0$ there is a $B \in \mathcal{D}$ so that $\mu(A \triangle B) < \epsilon$.  The following lemma is fairly well-known.  

\begin{lemma}\label{lm:sep.sub}
If $X$ is a separable subspace of $L^p(\Omega; \F)$, then there is a separable subspace 
$\Omega_0$ of $\Omega$ so that $X \subseteq L^p(\Omega_0; \F)$. 
\end{lemma}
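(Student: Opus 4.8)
The plan is to build $\Omega_0$ as a countably generated sub-measure-space of $\Omega$ on which the separable subspace $X$ already lives. First I would fix a countable dense subset $\{f_n : n \in \N\}$ of $X$ and a genuine measurable representative of each $f_n$. Since each $f_n$ lies in $L^p(\Omega;\F)$, the set $E_{n,k} := \{x : |f_n(x)| > 1/k\}$ has finite measure, and $f_n$ vanishes $\mu$-almost everywhere off $S := \bigcup_{n,k} E_{n,k}$. Let $\mathcal{C}$ be the countable family consisting of all the sets $E_{n,k}$ together with all sets $f_n^{-1}(B)$, where $B$ ranges over open balls in $\F$ with rational center and rational radius; put $\mathcal{M}_0 := \sigma(\mathcal{C})$, and let $\Omega_0$ be the measure space carried by $S$ with $\sigma$-algebra $\{A \cap S : A \in \mathcal{M}_0\}$ and the restriction of $\mu$ (equivalently, one may keep the underlying set $X$ and use $\mathcal{M}_0$, since nothing below sees $X \setminus S$).

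Next I would check the inclusion $X \subseteq L^p(\Omega_0;\F)$. Because the rational balls generate the Borel $\sigma$-algebra of $\F$, each $f_n$ is $\mathcal{M}_0$-measurable, and $\int |f_n|^p\, d\mu$ has the same value whether computed against $\mu$ or against its restriction to $\mathcal{M}_0$; hence $f_n \in L^p(\Omega_0;\F)$ with exactly the norm it carries in $L^p(\Omega;\F)$. Consequently the canonical map $L^p(\Omega_0;\F) \to L^p(\Omega;\F)$ is a linear isometry whose range, being complete, is a closed subspace of $L^p(\Omega;\F)$ that contains every $f_n$ and therefore contains $\overline{\{f_n : n \in \N\}} \supseteq X$.

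It then remains to verify that $\Omega_0$ is separable in the sense defined above. I would take $\mathcal{D}$ to be the algebra of sets generated by $\mathcal{C}$, which is countable, and invoke the standard measure-theoretic approximation fact: since the sets $E_{n,k} \in \mathcal{D}$ have finite measure and cover $S$, the restriction of $\mu$ to $\mathcal{M}_0$ is $\sigma$-finite, and for such a space every set in $\sigma(\mathcal{C})$ of finite measure can be approximated, to within arbitrary $\epsilon > 0$ in symmetric difference, by a member of $\mathcal{D}$ — the usual argument being that the collection of sets admitting such approximations forms a $\sigma$-algebra containing $\mathcal{C}$.

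I expect the main (really the only) obstacle to be this last step: one must secure the $\sigma$-finiteness foothold before the approximation lemma applies, and this does not follow from separability of $X$ alone if one insists on retaining the underlying set $X$ together with the full measure $\mu$ — which is exactly why the passage to $S$ (equivalently, the insertion of the $E_{n,k}$ into $\mathcal{C}$) is essential. Indeed, countable generation by itself is not sufficient for separability, as the Borel $\sigma$-algebra on $\R$ equipped with counting measure shows. Everything else — density of the $f_n$, their $\mathcal{M}_0$-measurability, and the isometric identification of $L^p(\Omega_0;\F)$ with a closed subspace of $L^p(\Omega;\F)$ — is routine.
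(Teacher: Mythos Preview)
The paper does not supply a proof of this lemma; it is stated as ``fairly well-known'' and left at that. Your argument is correct and is exactly the standard one: pass to the $\sigma$-algebra generated by countably many preimage sets coming from a countable dense family in $X$, restrict to the $\sigma$-finite support $S$, and invoke the algebra-approximation lemma for $\sigma$-finite measures. Your remark that countable generation alone does not yield separability (counting measure on the Borel sets of $\R$) and that the $\sigma$-finiteness via the sets $E_{n,k}$ is the essential point is well taken. The only place where your sketch is slightly loose is the claim that ``the collection of sets admitting such approximations forms a $\sigma$-algebra'': literally this holds for finite measures, and in the $\sigma$-finite case one first intersects the target set with a large finite-measure set from $\mathcal{D}$ before applying the finite-measure version; but this is routine and you clearly have it in mind.
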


The next theorem is a consequence of the classification of separable $L^p$-spaces.

\begin{theorem}\label{thm:sep.embed}
Every separable $L^p(\F)$ space isometrically embeds into $L^p([0,1]; \F)$.
\end{theorem}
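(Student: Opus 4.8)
The plan is to import the classification of separable $L^p$ spaces and then verify, by hand, that every space appearing on the resulting list embeds isometrically into $L^p([0,1];\F)$.

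First I would reduce to a probability space. Given a separable $L^p(\F)$ space $L^p(\Omega;\F)$, Lemma~\ref{lm:sep.sub} lets us assume $\Omega=(X,\mathcal{M},\mu)$ is itself separable. Every element of $L^p(\Omega;\F)$ has $\sigma$-finite support, so the union $Y$ of the supports of a countable dense subset of $L^p(\Omega;\F)$ is $\sigma$-finite and measurable, and $L^p(\Omega;\F)=L^p(Y,\mathcal{M}|_Y,\mu|_Y;\F)$; hence we may assume $\Omega$ is $\sigma$-finite. Choosing a strictly positive density $w$ with $\int_X w\,d\mu=1$, the map $f\mapsto w^{-1/p}f$ is a linear surjective isometry from $L^p(\mu;\F)$ onto $L^p(w\,d\mu;\F)$, so we may further assume $\mu$ is a probability measure.

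Next I would apply the structure theory of separable probability algebras: the measure algebra of $\Omega$ is the direct sum of an at most countable family of atoms, of masses $c_1,c_2,\dots$ with $\sum_i c_i\le 1$, together with a non-atomic part of mass $c_0=1-\sum_i c_i$, and the non-atomic part, being (up to scaling) a separable non-atomic probability algebra, is isomorphic to the Lebesgue algebra of $[0,c_0]$. Passing to $L^p$, this exhibits $L^p(\Omega;\F)$ as isometrically isomorphic to $L^p([0,c_0];\F)\oplus_p\ell^p(\F)$, with the conventions that $[0,0]$ and an empty index set contribute $0$ and that finitely many atoms contribute some $\ell^p_n(\F)$; together with the rescaling isometry below, this shows every separable $L^p(\F)$ space is isometrically isomorphic to one of $\ell^p_n(\F)$, $\ell^p(\F)$, $L^p([0,1];\F)$, $L^p([0,1];\F)\oplus_p\ell^p_n(\F)$, or $L^p([0,1];\F)\oplus_p\ell^p(\F)$.

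It then remains to record three elementary isometries. (i) For $a>0$ the map $(Uf)(t)=a^{1/p}f(at)$ is a linear surjective isometry $L^p([0,a];\F)\to L^p([0,1];\F)$. (ii) Partitioning $[0,1]$ into countably many subintervals of positive length and applying (i) on each piece shows that $L^p([0,1];\F)$ is isometrically isomorphic to the $\ell^p$-sum of countably many copies of itself; in particular $L^p([0,1];\F)\oplus_p L^p([0,1];\F)\cong L^p([0,1];\F)$. (iii) The map sending the $n$-th standard basis vector of $\ell^p(\F)$ to $|I_n|^{-1/p}\chi_{I_n}$, where $\{I_n\}$ is a partition of $[0,1]$ into intervals of positive length, is an isometric embedding of $\ell^p(\F)$ into $L^p([0,1];\F)$, as already noted in Section~\ref{sec:intro}. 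Combining (i)--(iii), each of the five spaces on the list above embeds isometrically into $L^p([0,1];\F)\oplus_p L^p([0,1];\F)\cong L^p([0,1];\F)$, which completes the proof.

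The main obstacle is the input from measure theory: the isomorphism of every separable non-atomic probability algebra with the Lebesgue algebra of an interval is the one genuinely nontrivial ingredient, and it is the fact I would cite rather than reprove (equivalently, one could simply quote the classification of separable $L^p$ spaces). Inside the argument the only slightly delicate point is the reduction to a probability measure, which uses that $L^p$-functions have $\sigma$-finite support and that reweighting by an equivalent density is an $L^p$-isometry; the rescaling and partition isometries are routine, and the scalar field $\F$ plays no role at any stage.
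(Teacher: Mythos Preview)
Your proposal is correct and follows exactly the route the paper indicates: the paper does not actually prove Theorem~\ref{thm:sep.embed} but states it as background, noting only that it is ``a consequence of the classification of separable $L^p$-spaces.'' You have supplied that classification argument in detail, so your approach coincides with what the paper defers to the literature.
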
 

The proof of Theorem \ref{thm:main} hinges on the following.

\begin{theorem}\label{thm:lr.Lp}
If $1 \leq p \leq r \leq 2$, then there is an isometric embedding of $\ell^r(\F)$ into $L^p([0,1]; \F)$.  
\end{theorem}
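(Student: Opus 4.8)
The plan is to construct the embedding of $\ell^r(\F)$ into $L^p([0,1];\F)$ explicitly using $r$-stable random variables, following the classical approach of Bretagnolle, Dacunha-Castelle, and Krivine. Recall that for $0 < r \leq 2$ there exists a symmetric random variable $\theta_r$ on a probability space (which we may take to be $[0,1]$ with Lebesgue measure) whose characteristic function is $\mathbb{E}[e^{it\theta_r}] = e^{-|t|^r}$; for $r = 2$ this is a Gaussian and for $r = 1$ a Cauchy variable. The key algebraic property is \emph{$r$-stability}: if $\theta_r^{(1)}, \theta_r^{(2)}, \ldots$ are i.i.d.\ copies of $\theta_r$, then for any scalars $a_1, \ldots, a_n$ the sum $\sum_k a_k \theta_r^{(k)}$ has the same distribution as $\left( \sum_k |a_k|^r \right)^{1/r} \theta_r$. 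In the complex case one uses the observation that a complex-valued variable built from an $r$-stable radial part and an independent uniform phase has the analogous stability property with $|a_k|$ the complex modulus; this is exactly the kind of statement the new Banach-lattice characterization of complex $L^p$ spaces is meant to handle cleanly.

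First I would fix such a sequence $(\theta_r^{(k)})_k$ of independent $r$-stable variables on $([0,1], \text{Leb})$ and define $T : \ell^r(\F) \to L^p([0,1];\F)$ on the standard basis by $T(e_k) = c\,\theta_r^{(k)}$, extended linearly, where $c > 0$ is a normalizing constant chosen so that $\|T(e_1)\|_p = 1$. The point is that $T$ is well-defined and isometric on finitely supported sequences: for $a = (a_1, \ldots, a_n, 0, 0, \ldots)$ we have, using stability and then the definition of $c$,
\[
\|T(a)\|_p^p = \mathbb{E}\left[ \left| c \sum_k a_k \theta_r^{(k)} \right|^p \right] = c^p \left( \sum_k |a_k|^r \right)^{p/r} \mathbb{E}\big[ |\theta_r|^p \big] = \left( \sum_k |a_k|^r \right)^{p/r} = \|a\|_r^p,
\]
where the crucial finiteness $\mathbb{E}[|\theta_r|^p] < \infty$ holds precisely because $p < r \leq 2$ forces $p < 2$ and $p \le r$, and the tails of an $r$-stable variable decay like $|x|^{-r-1}$, so the $p$-th moment converges for $p < r$ (the boundary cases $p = r$ and $r = 2$ being handled separately, with $r = 2$ trivial since a Gaussian has all moments). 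Then, since $T$ is isometric on a dense subspace of $\ell^r(\F)$, it extends uniquely to an isometric embedding of all of $\ell^r(\F)$.

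For the complex case I would either (a) repeat the computation with $\theta_r^{(k)}$ now a complex $r$-stable variable — i.e.\ radial distribution $r$-stable, phase uniform and independent — and verify the complex stability identity directly, or (b) invoke the new characterization of complex $L^p$ spaces via Banach lattices from Section~\ref{sec:bak.prelim.fa}, reducing the complex statement to the real one by viewing the relevant moments as lattice operations. I expect the main obstacle to be the probabilistic lemma establishing the existence and moment properties of $r$-stable variables — specifically the tail estimate $\Pr(|\theta_r| > x) \sim C x^{-r}$ that guarantees $\mathbb{E}[|\theta_r|^p] < \infty$ exactly in the range $p \le r$, together with handling the endpoint $p = r$ and confirming that stability extends from finite sums to the countable setting in the $L^p$ limit. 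Once these ingredients are in place, the isometry computation and the density/extension argument are routine, so the real content of the proof is packaging the stable-variable machinery cleanly and, in the complex case, connecting it to the Banach-lattice framework developed earlier.
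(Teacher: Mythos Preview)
Your proposal is correct and matches the paper's approach: the paper does not give an independent proof of this theorem but reviews exactly the Bretagnolle--Dacunha-Castelle--Krivine argument you outline---symmetric $r$-stable variables with characteristic function $e^{-|t|^r}$, the tail estimate $P[|g|>t]\sim t^{-r}$ giving finite $p$-th moment for $p<r$ (with $r=2$ Gaussian and $p=r$ handled separately), Kolmogorov's construction of an i.i.d.\ sequence on $[0,1]$, and the embedding $e_n\mapsto \norm{g_n}_p^{-1}g_n$. One small correction: for the complex case the paper follows your option~(a), using complex symmetric $r$-stable variables with $\Phi_g(z)=\exp(-c|z|^r)$ directly; the Banach-lattice characterization of complex $L^p$ spaces you mention in option~(b) is not used here but only later, to represent complex $L^p$ spaces in continuous logic.
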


As noted in the introduction, the real case of Theorem \ref{thm:lr.Lp} was first proven by Bretagnolle, Dachuna-Castelle, and Krivine \cite{Bretagnolle.Dacunha-Castelle.Krivine.1965}.  It will be useful for us to review the key elements of their proof.  We begin with some probability theory our sources for which are Chapter 3 of \cite{Durrett.2010}, Chapter 2 of \cite{Ushakov.1999}, and Chapters 1 and 2 (especially Sections 2.5 and 2.6) of \cite{Samorodnitsky.Taqqu.1994}.

Suppose $g$ is a random variable.  The \emph{characteristic function} $\Phi_g : \R \rightarrow \R$ of $g$ is defined by $\Phi_g(t) = E[e^{igt}]$.  
It is well-known that $\Phi_g = \Phi_h$ only when $g$ and $h$ have the same distribution. 

A random variable $g$ is \emph{$r$-stable} if whenever $g_1$, $g_2$ are independent random variables with the same distribution as $g$ and $a,b > 0$,
$ag_1 + bg_2$ has the same distribution as  $\sqrt[r]{a^r + b^r}g + c$ for some real number $c$.  It is well-known that $r$-stable random variables exist only when $0 < r \leq 2$.  The case $r=2$ corresponds to Gaussian random variables.  
The $r$-stable random variables can be characterized by the forms of their characteristic functions.  In particular, $t \mapsto \exp(-\sigma^r |t|^r)$ is the characteristic function of an $r$-stable random variable whenever $0 < r \leq 2$ and $\sigma \geq 0$.  Furthermore, any random variable with such a characteristic function is $r$-stable.  These are known as the \emph{symmetric $r$-stable} random variables.    Suppose $g$ is symmetric $r$-stable with $\sigma > 0$ (i.e. $\Phi_g$ is non-constant).  Then, if $g_0$ and $g_1$ are independent random variables with the same distribution as $g$, and if $|a|^r + |b|^r = 1$, it follows that $ag_0 + bg_1$ has the same distribution
as $g$.  

Now, suppose $g$ is a symmetric $r$-stable random variable on the unit interval with $\sigma > 0$ and $1 \leq r \leq 2$.  If $r < 2$, then $P[|g(x)| > t]$ is asymptotic to $t^{-r}$ and so $g$ has finite $L^p$ norm when $1 \leq p < r$.   If $p = 2$, then $g$ has finite $L^q$ norm for all $q \geq 1$ since all of its absolute moments are finite.   By a result of Kolomogorov, there is an infinite independent family $\{g_n\}_{n \in \N}$ of random variables on $[0,1]$ each of which has the same distribution as $g$.  It then follows that $e_n \mapsto \norm{g_n}_p^{-1} g_n$ is an isometric embedding of $\ell^r(\R)$ into $L^p([0,1]; \R)$.  

We now discuss complex random variables (i.e. $2$-dimensional random vectors).  For these random variables, $r$-stability is defined as in the real case except the constant $d$ is complex.  If a complex random variable is $r$-stable, then its real and imaginary parts are 
$r$-stable.  The characteristic function $\Phi_g$ of a complex random variable $g$ is defined by $\Phi_g(z) = E[\exp(i\Re(\overline{z}g)]$ (i.e. scalar multiplication is replaced by the inner product).  For each $c \geq 0$ there is an $r$-stable complex random variable $g$ so that $\Phi_g(z) = \exp(-c|z|^r)$ and any complex random vector with such a characteristic function is $r$-stable.  Once again, these are called symmetric.  The proof of Theorem \ref{thm:lr.Lp} in the complex case can now be effected in the same manner as the real case.

\subsubsection{Banach lattices and abstract $L^p$ spaces}

Our source for this subsection is \cite{Meyer-Nieberg.1991}.

A \emph{Banach lattice} consists of a real Banach space $\B$ together with a lattice ordering $\leq$ of $\B$ with the following properties.
\begin{itemize}
	\item If $v_0,v_1,u \in \B$, and if $v_0 \leq v_1$, then $v_0 + u \leq v_1 + u$.  
	
	\item If $u,v \in \B$ are such that $u \leq v$, and if $a$ is a nonnegative real, then 
	$au \leq av$.
	
	\item $\norm{u}_\B \leq \norm{v}_\B$ whenever $u,v \in \mathcal{B}$ and $|u| \leq |v|$ (where $|a| = a \join (-a)$).
\end{itemize}   
If $u$ is a vector of a Banach lattice we write $u^+$ for $u \join \zerovec$, $u^-$ for $(-u) \join \zerovec$.   Furthermore, we say that $u,v \in \mathcal{B}$ are \emph{disjoint} if $|u| \meet |v| = \zerovec$.

When $f,g \in L^p(\Omega; \R)$, let $f \leq g$ hold if and only if 
$f(t) \leq g(t)$ almost everywhere.  It follows that $(L^p(\Omega; \R), \leq)$ is a Banach lattice.

When $1 \leq p < \infty$, a Banach lattice is an \emph{abstract real $L^p$-space} if it satisfies the condition that 
$\norm{x + y}^p = \norm{x}^p + \norm{y}^p$ whenever $|x| \meet |y|  = \zerovec$.   Clearly, every real $L^p$ space is an abstract real $L^p$-space.  Moreover, the converse is true as was proven by Kakutani in 1941 \cite{Kakutani.1941} (see also Nakano \cite{Nakano.1941}).

\begin{theorem}[Kakutani Representation Theorem]\label{thm:kakutani}
Every abstract real $L^p$ space is isometrically isomorphic to a real $L^p$-space.  
\end{theorem}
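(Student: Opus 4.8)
The plan is to follow Kakutani's original strategy, which represents an abstract real $L^p$ space as an $L^p$ space of a measure on the space of lattice homomorphisms — but before embarking on that, I would first check whether the theorem is actually needed in full generality here, since the paper only uses separable instances via Theorem~\ref{thm:sep.embed}. Assuming we want the general statement, the approach splits according to whether the lattice has a (weak) order unit.

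\textbf{Step 1: Reduce to the case with a weak order unit.} First I would show that every abstract real $L^p$ space decomposes as an $\ell^p$-sum (i.e. a sum where $p$-norms add in $p$-th powers) of \emph{principal bands}, each of which is an abstract real $L^p$ space possessing a weak order unit. The key tool is that in an abstract real $L^p$ space, disjointness behaves well: a maximal disjoint family of positive unit vectors generates bands whose $\ell^p$-sum is the whole space, using the defining additivity $\norm{x+y}^p = \norm{x}^p + \norm{y}^p$ on disjoint vectors together with an exhaustion/Zorn argument. Since an $\ell^p$-sum of $L^p$ spaces is again an $L^p$ space (a disjoint union of the underlying measure spaces), it suffices to treat a single band with weak order unit $e$.

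\textbf{Step 2: Represent the band with order unit.} Given a weak order unit $e$, consider the principal ideal $\B_e = \{x : |x| \leq \lambda e \text{ for some } \lambda\}$ with the order-unit norm $\norm{x}_\infty = \inf\{\lambda : |x| \le \lambda e\}$; by the Kakutani representation theorem for $M$-spaces (or Krein–Kakutani), $(\B_e, \norm{\cdot}_\infty)$ is lattice isometric to $C(K)$ for a compact Hausdorff $K$, with $e$ corresponding to the constant function $1$. The original $L^p$-norm restricted to $\B_e$ is then a lattice norm on $C(K)$ satisfying $\norm{f+g}^p = \norm{f}^p + \norm{g}^p$ for disjointly supported $f,g$; the functional $f \mapsto \norm{f^+}^p - \norm{f^-}^p$ (or more precisely a limiting/positivity argument on $\norm{\cdot}^p$ restricted to positive elements) yields a positive linear functional on $C(K)$, hence by Riesz representation a Borel measure $\mu$ on $K$ with $\norm{f}^p = \int_K |f|^p\, d\mu$ for $f \in \B_e$. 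Thus $\B_e$ embeds isometrically and lattice-isometrically into $L^p(K,\mu;\R)$ as a dense sublattice (density because $e$ is a weak order unit and $\B_e$ is dense in the $L^p$-norm), so the whole band is $L^p(K,\mu;\R)$.

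\textbf{Main obstacle.} The hardest part is Step~2, specifically establishing that $f \mapsto \norm{f}^p$ on positive elements of $C(K)$ is (the restriction of) an additive, and hence — after verifying it extends linearly over differences — linear functional. Additivity on \emph{disjoint} positive elements is given by hypothesis, but one needs additivity on \emph{all} positive pairs, which requires approximating arbitrary positive $f, g$ by disjointly-supported pieces via partitions of $K$ into small pieces on which $f$ and $g$ are nearly constant, and controlling the error using continuity of the norm and the lattice inequalities; this is where the bulk of the genuine work lies, and it is essentially the content of Kakutani's argument. The remaining verifications — that the resulting embedding preserves the lattice operations, that it is onto $L^p(K,\mu)$, and that the band decomposition in Step~1 reassembles correctly — are routine given the standard theory of Banach lattices in \cite{Meyer-Nieberg.1991}.
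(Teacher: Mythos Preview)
The paper does not actually prove Theorem~\ref{thm:kakutani}; it is stated as background and attributed to Kakutani's 1941 paper \cite{Kakutani.1941} (with a parallel reference to Nakano \cite{Nakano.1941}). There is therefore no in-paper argument to compare your proposal against.

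That said, your outline is a faithful sketch of the classical Kakutani route: decompose into bands with weak order unit via a maximal disjoint family, represent each band's principal ideal as a $C(K)$ space via the $M$-space representation, and then use the $p$-additivity on disjoint elements to produce a positive linear functional and hence a measure. Your identification of the main obstacle --- extending additivity of $f \mapsto \norm{f}^p$ from disjoint positive pairs to all positive pairs --- is exactly right and is the genuine content of the argument. One small caveat: in Step~1 you invoke Zorn's lemma to obtain a maximal disjoint family, which is fine for the general statement, but you might note (as you yourself hint) that the paper's applications only ever need separable abstract $L^p$ spaces, where a countable exhaustion avoids any choice principle; this is consistent with the paper's emphasis in Section~\ref{sec:conclusion} on avoiding the axiom of choice.
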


The following characterization of abstract real $L^p$-spaces will be very useful when representing them in continuous logic.

\begin{theorem}\label{thm:abstr.Lp.equiv}
Let $(\mathcal{B}, \leq)$ be a Banach lattice, and suppose $1 \leq p < \infty$.  
Then, $(\mathcal{B}, \leq)$ is an abstract real $L^p$ space if and only if 
$\norm{u + v}_\B^p \geq \norm{u}_\B^p + \norm{v}_\B^p$ whenever $u,v \geq \zerovec$.  
\end{theorem}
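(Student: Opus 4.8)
The plan is to prove the two implications separately. The forward implication should reduce, after an appeal to Kakutani's theorem, to the elementary numerical inequality $(s+t)^p \geq s^p+t^p$, valid for $s,t\geq 0$ and $p\geq 1$. The reverse implication, where one must recover an \emph{upper} estimate for $\norm{x+y}_\B$ on disjoint vectors, is the part I expect to require genuine work.

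So suppose first that $(\B,\le)$ is an abstract real $L^p$ space. I would invoke the Kakutani Representation Theorem (Theorem~\ref{thm:kakutani}) to fix a measure space $\Omega$ and a map $T\colon\B\to L^p(\Omega;\R)$ that is simultaneously a linear isometry and a lattice isomorphism. For $u,v\geq\zerovec$ put $f=Tu$ and $g=Tv$, so that $f,g\geq 0$ almost everywhere; then
\[
\norm{u+v}_\B^p \;=\; \int_\Omega (f+g)^p\, d\mu \;\geq\; \int_\Omega \bigl(f^p+g^p\bigr)\, d\mu \;=\; \norm{u}_\B^p + \norm{v}_\B^p ,
\]
the inequality being the pointwise fact above (expand the binomial, or use that $s\mapsto s^p$ is convex and increasing with $0^p=0$). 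A reader who prefers to avoid Kakutani's theorem, and with it any appeal to the axiom of choice, can obtain the same estimate directly inside Banach lattice theory: passing to the principal ideal generated by $u\join v$, on which $u\join v$ is a weak order unit, Freudenthal's spectral theorem approximates $u$ and $v$ in norm by positive elements $\sum_i c_i e_i$ and $\sum_i d_i e_i$ written over one common finite pairwise-disjoint system $e_1,\dots,e_n\geq\zerovec$; iterating the defining identity gives $\norm{\sum_i c_i e_i}_\B^p = \sum_i c_i^p\,\norm{e_i}_\B^p$ for $c_i\geq 0$, and the numerical inequality applied coordinate by coordinate, followed by a passage to the limit, finishes this direction.

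For the converse, assume $\norm{u+v}_\B^p \geq \norm{u}_\B^p + \norm{v}_\B^p$ whenever $u,v\geq\zerovec$, and let $x,y$ satisfy $|x|\meet|y|=\zerovec$. Because $|x+y|=|x|+|y|$ in that case and $\norm{w}_\B=\norm{\,|w|\,}_\B$, replacing $x,y$ by $|x|,|y|$ lets me assume $x,y\geq\zerovec$ and $x\meet y=\zerovec$, so that $x+y=x\join y$; the hypothesis then gives $\norm{x+y}_\B^p \geq \norm{x}_\B^p + \norm{y}_\B^p$ at once. Everything therefore comes down to the reverse inequality $\norm{x+y}_\B^p \leq \norm{x}_\B^p + \norm{y}_\B^p$ for disjoint positive vectors, and this is the step I expect to be the main obstacle, since the hypothesis as written only ever produces lower bounds on norms of sums. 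I would attack it by playing the hypothesis, applied to the many positive decompositions of $x+y$ available inside the band generated by $x\join y$, against the monotonicity of the lattice norm and the identity $x+y=x\join y$; this reverse bound is exactly the ``$p$-convexity with constant $1$'' counterpart of the stated condition, and once it is in hand the two estimates combine to give $\norm{x+y}_\B^p = \norm{x}_\B^p + \norm{y}_\B^p$ for every disjoint pair, i.e., $(\B,\le)$ is an abstract real $L^p$ space.
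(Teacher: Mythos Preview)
The paper does not give a proof of this theorem at all: immediately after the statement it says ``The proof of Theorem~\ref{thm:abstr.Lp.equiv} is highly non-trivial; see Chapter 17 of \cite{Zaanen.1983}.'' So there is no in-paper argument to compare your proposal against, only the assertion that the matter is delicate and a pointer to Zaanen.

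Your forward implication is fine, and the Freudenthal alternative is a reasonable way to sidestep any worry about choice.

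The converse, however, has a genuine gap. You correctly isolate the crux---the upper bound $\norm{x+y}_\B^p \leq \norm{x}_\B^p + \norm{y}_\B^p$ for disjoint positive $x,y$---but then you do not prove it. The sentence ``I would attack it by playing the hypothesis, applied to the many positive decompositions of $x+y$ \ldots\ against the monotonicity of the lattice norm'' is a list of ingredients, not an argument; and calling the missing bound ``exactly the $p$-convexity with constant $1$ counterpart of the stated condition'' is a restatement of what needs to be shown, not a derivation of it. The difficulty is real: the hypothesis is purely a \emph{lower} bound (superadditivity of $\norm{\cdot}_\B^p$ on the positive cone), and neither iterating it over decompositions nor invoking $\norm{u}_\B \leq \norm{v}_\B$ for $|u|\leq|v|$ has any evident way of producing an \emph{upper} bound on $\norm{x+y}_\B$. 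This is precisely why the paper flags the result as highly non-trivial and defers to Zaanen, where the argument passes through the Bohnenblust--Nakano theory of $p$-additive norms. As written, your converse is a plan without a proof.
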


The proof of Theorem \ref{thm:abstr.Lp.equiv} is highly non-trivial; see Chapter 17 of \cite{Zaanen.1983}.

We will consider complex abstract $L^p$ spaces in the following subsection.

\subsection{Preliminaries from functional analysis}\label{sec:bak.prelim.fa::subsec:prelim}

\subsubsection{Disintegrations}

Suppose $1 \leq p < \infty$, $\mathcal{B}$ is a Banach space, and $v_1, \ldots, v_n \in \mathcal{B}$. We say $v_1, \ldots, v_n$ are \emph{$L^p$-formally disjointly supported} if 
	\[
	\norm{\sum_j \alpha_j v_j}_\B^p = \sum_j |\alpha_j|^p \norm{v_j}_\B^p
	\]
	for all scalars $\alpha_1, \ldots, \alpha_n$.
If $f_1, \ldots, f_n \in L^p(\Omega;\F)$ are disjointly supported, then they are $L^p$-formally disjointly supported. 
By a result of J. Lamperti, if $p \neq 2$, then $L^p$-formally disjointly supported vectors in $L^p(\Omega; \F)$ are disjointly supported \cite{Lamperti.1958}.  If $X,Y \subseteq \mathcal{B}$ have the property that $u,v$ are $L^p$-formally disjointly supported whenever $u \in X$ and $v \in Y$, then we say that $X,Y$ are $L^p$-formally disjointly supported.

Let $\N$ denote the set of nonnegative integers.  Let $\N^*$ denote the set of all finite sequences of nonnegative integers including the empty sequence.  When $\sigma \in \N^*$, $|\sigma|$ denotes the length of $\sigma$.  If $\sigma, \tau \in \N^*$, we write $\sigma \sqsubset \tau$ if $\sigma$ is a prefix of $\tau$.  If $\sigma \sqsubset \tau$, and if $|\tau| = |\sigma| + 1$, then 
we say that $\tau$ is a \emph{child} of $\sigma$.  By a \emph{tree} we mean a nonempty subset of $\N^*$ that is closed under prefixes.  We write $\sigma\cat\tau$ for the concatenation of $\sigma$ with $\tau$.

Suppose $\mathcal{B}$ is a Banach space and $\phi : S \rightarrow \mathcal{B}$ where $S \subseteq \N^*$ is a tree.  $\phi$ is \emph{summative} if for every $\nu \in S$, $\phi(\nu) = \sum_{\nu'} \phi(\nu')$ where $\nu'$ ranges over all the children of $\phi$ in $S$.  $\phi$ is \emph{formally $L^p$-separating} if 
$\phi(\nu_1), \ldots, \phi(\nu_n)$ are formally $L^p$-disjointly supported whenever $\nu_1, \ldots, \nu_n \in S$ are incomparable.  Finally, $\phi$ is an \emph{$L^p$-formal disintegration} of $\mathcal{B}$ if 
it is summative, formally $L^p$-separating, never zero, and its range is linearly dense (i.e. $\B$ is the closure of the linear span of $\ran(\phi)$.

Suppose $\mathcal{B}_0$ and $\mathcal{B}_1$ are Banach spaces and $\phi_j : S_j \rightarrow \mathcal{B}_j$ 
for each $j \in \{0,1\}$.  A map $f : S_0 \rightarrow S_1$ is an \emph{isomorphism} of $\phi_0$ and $\phi_1$ if 
if is an order isomorphism (with respect to $\sqsubseteq$) of $S_0$ onto $S_1$ and if $\norm{\phi_1(f(\nu))}_{\B_1} = \norm{\phi_0(\nu)}_{\B_0}$ for all $\nu \in S_0$.  

Our main result on $L^p$-formal disintegrations (Theorem \ref{thm:lifting} below) is that isomorphisms of $L^p$-formal disintegrations induce isometric isomorphisms of the corresponding spaces.  To this end, we first prove the following.

\begin{lemma}\label{lm:sum}
Suppose $S \subseteq \N^*$ is a tree and $\phi : S \rightarrow \mathcal{B}$ is summative.  
Suppose $F$ is a finite subtree of $S$, and let $\{\beta_\nu\}_{\nu \in F}$ be a family of scalars.  Let $F'$ denote the terminal nodes of $F$.  Then, 
\[
\sum_{\nu \in F} \beta_\nu \phi(\nu) = \sum_{\nu \in F'} \left(\sum_{\mu \sqsubseteq \nu} \beta_\mu \right) \phi(\nu).
\]
\end{lemma}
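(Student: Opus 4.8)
The plan is to reduce the identity to a single auxiliary fact: the value of $\phi$ at any node of $F$ equals the sum of its values at the terminal nodes of $F$ lying above that node. Granting this, the lemma follows by interchanging two finite summations.

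First I would prove the following claim, by induction on $h(\nu) := \max\{\,|\ell| - |\nu| : \ell \in F,\ \nu \sqsubseteq \ell\,\}$ (a nonnegative integer): for every $\nu \in F$,
\[
\phi(\nu) \;=\; \sum_{\substack{\ell \in F'\\ \nu \sqsubseteq \ell}} \phi(\ell).
\]
If $h(\nu) = 0$ then $\nu \in F'$ and the right-hand side is $\phi(\nu)$. If $h(\nu) > 0$ then $\nu$ is not terminal in $F$; since $F$ is a subtree of $S$, the children of $\nu$ in $F$ are exactly the children of $\nu$ in $S$, and there are finitely many of them, so summativity gives $\phi(\nu) = \sum_{\mu}\phi(\mu)$, the sum ranging over the children $\mu$ of $\nu$ in $F$. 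Each such $\mu$ has $h(\mu) < h(\nu)$, so the induction hypothesis applies to it; substituting and using that every $\ell \in F'$ with $\nu \sqsubseteq \ell$ and $\ell \neq \nu$ extends exactly one child of $\nu$ in $F$ (namely the prefix of $\ell$ of length $|\nu|+1$), the claim follows.

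Next I would compute
\[
\sum_{\nu \in F} \beta_\nu\, \phi(\nu) \;=\; \sum_{\nu \in F} \beta_\nu \sum_{\substack{\ell \in F'\\ \nu \sqsubseteq \ell}} \phi(\ell) \;=\; \sum_{\ell \in F'} \Biggl(\, \sum_{\substack{\nu \in F\\ \nu \sqsubseteq \ell}} \beta_\nu \Biggr) \phi(\ell),
\]
the interchange being justified since every sum involved is finite. Finally, because $F$ is closed under prefixes and $\ell \in F$, the set $\{\nu \in F : \nu \sqsubseteq \ell\}$ is precisely $\{\mu : \mu \sqsubseteq \ell\}$, so the inner sum is $\sum_{\mu \sqsubseteq \ell}\beta_\mu$, giving the asserted identity.

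The one step requiring care is the inductive step of the auxiliary claim: one must check that the terminal nodes of $F$ strictly above $\nu$ split, without overlap, according to which child of $\nu$ in $F$ they extend, and this is also where one uses that a non-terminal node of the subtree $F$ retains all of its $S$-children in $F$ (without which the statement would fail). Everything after the claim is a finite rearrangement of sums together with the observation that $F$ contains all prefixes of each of its elements.
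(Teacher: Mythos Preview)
Your argument is correct and takes a genuinely different route from the paper's. The paper proceeds by induction on the height of $F$: at the inductive step it isolates the $\sqsubseteq$-maximal non-terminal nodes $F''$, applies the induction hypothesis to the tree $F\setminus F'$, and then uses summativity once at each node of $F''$ to push the coefficients down to the leaves. You instead first prove the leaf-expansion $\phi(\nu)=\sum_{\ell\in F',\,\nu\sqsubseteq \ell}\phi(\ell)$ for every $\nu\in F$ and then obtain the lemma by a single interchange of finite sums. Your version is arguably cleaner: it isolates exactly where summativity enters (the auxiliary claim) and makes the remainder a pure Fubini step, whereas the paper's induction interleaves the two.

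One caution on the step you flag. Under the paper's definition a tree is simply a prefix-closed subset of $\N^*$, so ``$F$ is a subtree of $S$'' does not by itself imply that a non-terminal node of $F$ has all of its $S$-children in $F$; your sentence ``since $F$ is a subtree of $S$, the children of $\nu$ in $F$ are exactly the children of $\nu$ in $S$'' therefore overstates what the definition gives. You are right, however, that this hypothesis is exactly what is needed and that the lemma is false without it; the paper's own proof uses it tacitly at the line invoking summativity for $\nu\in F''$. Both arguments are complete once ``finite subtree'' is read in this stronger sense (which is how the lemma is used later, e.g.\ with $F$ taken to be a full level set $\{0,1\}^{\le n}$).
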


\begin{proof}
Let $n$ denote the height of $F$.  
We proceed by induction on $n$.  There is nothing to prove if $n = 0$, so suppose $n > 0$ and that the claim holds for $n - 1$.  For each $\nu \in F$, let $\gamma_\nu = \sum_{\mu \sqsubseteq \nu} \beta_\mu$.  
Let $F''$ denote the set of $\sqsubseteq$-maximal nodes of $F - F'$.  
By the induction hypothesis, we have, 
\[
\sum_{\nu \in F} \beta_\nu \phi(\nu) = \sum_{\nu \in F''} \gamma_\nu \phi(\nu) + \sum_{\nu \in F'} \beta_\nu \phi(\nu).
\]
For each $\nu \in F''$, let $F'_\nu$ denote the set of all children of $\nu$ that belong to $F'$.
Since $\phi$ is summative, for each $\nu \in F''$, 
\[
\gamma_\nu \phi(\nu) + \sum_{\mu \in F''_\nu} \beta_\mu \phi(\mu) = 
\sum_{\mu \in F''_\nu} (\gamma_\nu + \beta_\mu) \phi(\mu) = \sum_{\mu \in F''_\nu} \gamma_\mu \beta_\mu.
\]
Therefore,
\begin{eqnarray*}
\sum_{\nu \in F''} \gamma_\nu \phi(\nu) + \sum_{\nu \in F'} \beta_\nu \phi(\nu) & = & 
\sum_{\nu \in F''} \gamma_\nu \phi(\nu) + \sum_{\nu \in F''} \sum_{\mu \in F''_\nu} \beta_\mu \phi(\mu)\\
& = & \sum_{\nu \in F''} \sum_{\mu \in F''_\nu} \gamma_\mu \phi(\mu) \\
& = & \sum_{\nu \in F'} \gamma_\nu \phi(\nu).
\end{eqnarray*}
\end{proof}

\begin{theorem}\label{thm:lifting}
Suppose $\phi_j$ is a $L^p$-formal disintegration of $\mathcal{B}_j$ for each $j \in \{0,1\}$, and suppose 
$f$ is an isomorphism of $\phi_0$ with $\phi_1$.  Then, there is a unique isometric isomorphism 
$T_f$ of $\mathcal{B}_0$ onto $\mathcal{B}_1$ so that $T_f(\phi_0(\nu)) = \phi_1(f(\nu))$ for all 
$\nu \in \dom(\phi_0)$.  
\end{theorem}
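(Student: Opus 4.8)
The plan is to define $T_f$ first on the linear span of $\ran(\phi_0)$ by the evident rule
$T_f\left(\sum_i \beta_i \phi_0(\nu_i)\right) = \sum_i \beta_i \phi_1(f(\nu_i))$, to verify that this is well defined and isometric, and then to extend by continuity using the fact that $\ran(\phi_0)$ is linearly dense in $\B_0$. Both well-definedness and the isometry property will drop out of a single norm computation whose engine is Lemma \ref{lm:sum}. Uniqueness is then automatic.

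Concretely, given scalars $\beta_1, \ldots, \beta_m$ and nodes $\nu_1, \ldots, \nu_m \in S_0$, let $F$ be the finite subtree of $S_0$ consisting of all prefixes of the $\nu_i$, and for $\nu \in F$ put $\beta_\nu = \sum_{\{i \,:\, \nu_i = \nu\}} \beta_i$, so that $\sum_i \beta_i \phi_0(\nu_i) = \sum_{\nu \in F} \beta_\nu \phi_0(\nu)$. Writing $F'$ for the set of terminal nodes of $F$ and $\gamma_\nu = \sum_{\mu \sqsubseteq \nu} \beta_\mu$, Lemma \ref{lm:sum} gives $\sum_{\nu \in F} \beta_\nu \phi_0(\nu) = \sum_{\nu \in F'} \gamma_\nu \phi_0(\nu)$. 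The nodes in $F'$ are pairwise incomparable, so $\{\phi_0(\nu)\}_{\nu \in F'}$ are $L^p$-formally disjointly supported and hence $\norm{\sum_i \beta_i \phi_0(\nu_i)}_{\B_0}^p = \sum_{\nu \in F'} |\gamma_\nu|^p \norm{\phi_0(\nu)}_{\B_0}^p$. Since $f$ is an order isomorphism, $f(F)$ is a finite subtree of $S_1$ whose terminal nodes are exactly $f(F')$, the child relation and incomparability are preserved, and $\norm{\phi_1(f(\nu))}_{\B_1} = \norm{\phi_0(\nu)}_{\B_0}$ for every $\nu$. Applying Lemma \ref{lm:sum} in $\B_1$ with the coefficients transported along $f$ yields, in the very same way, $\norm{\sum_i \beta_i \phi_1(f(\nu_i))}_{\B_1}^p = \sum_{\nu \in F'} |\gamma_\nu|^p \norm{\phi_1(f(\nu))}_{\B_1}^p = \sum_{\nu \in F'} |\gamma_\nu|^p \norm{\phi_0(\nu)}_{\B_0}^p$. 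Therefore $\norm{\sum_i \beta_i \phi_0(\nu_i)}_{\B_0} = \norm{\sum_i \beta_i \phi_1(f(\nu_i))}_{\B_1}$; in particular a finite combination of the $\phi_0(\nu_i)$ vanishes precisely when the corresponding combination of the $\phi_1(f(\nu_i))$ vanishes, so $T_f$ is a well-defined linear isometry on the linear span of $\ran(\phi_0)$. As $\ran(\phi_0)$ is linearly dense, $T_f$ extends uniquely to an isometric linear map $\B_0 \to \B_1$; its image is closed (an isometric image of a Banach space) and contains $\{\phi_1(f(\nu)) : \nu \in S_0\} = \ran(\phi_1)$, which is linearly dense in $\B_1$, so $T_f$ is onto. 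Uniqueness is immediate: any bounded linear operator sending $\phi_0(\nu)$ to $\phi_1(f(\nu))$ agrees with $T_f$ on the linear span of $\ran(\phi_0)$, hence everywhere by continuity.

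The only real obstacle is the well-definedness step, and the difficulty there is exactly that the given nodes $\nu_1, \ldots, \nu_m$ are in general neither incomparable nor of equal length, so one cannot invoke $L^p$-formal disjoint supportedness directly on them; Lemma \ref{lm:sum} is what permits rewriting an arbitrary finite combination in "disintegrated" form supported on the pairwise incomparable terminal nodes of the generated subtree, after which the whole computation is transported verbatim through $f$. Beyond that, the proof requires only the routine observations that an order isomorphism of trees preserves the child relation, incomparability, and the notion of a terminal node of a finite subtree, so that Lemma \ref{lm:sum} may legitimately be applied on the $\B_1$ side with coefficients pushed forward along $f$.
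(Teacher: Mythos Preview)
Your proof is correct and follows essentially the same approach as the paper: define $T_f$ on the linear span, use Lemma \ref{lm:sum} to rewrite an arbitrary finite combination over the terminal nodes of the generated subtree, invoke the $L^p$-formally separating property there, and transport the computation through the order isomorphism $f$. The only (minor) difference is that you obtain well-definedness as an immediate consequence of the norm identity, whereas the paper first argues well-definedness separately (using that $\phi_0$ is never zero to force $\gamma_\nu = 0$) and then establishes the isometry; your packaging is slightly more economical and in fact does not require the ``never zero'' hypothesis at this step.
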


\begin{proof}
Let $S_j = \dom(\phi_j)$, and let $X_j$ denote the linear span of $\ran(\phi_j)$.  When $F \subseteq S_0$ is finite and $\{\alpha_\nu\}_{\nu \in F}$ is a family of scalars, let 
\[
T(\sum_{\nu \in F} \alpha_\nu \phi_0(\nu)) = \sum_{\nu \in F} \alpha_\nu \phi_1(f(\nu)).
\]

We first show that $T$ is a well-defined map on $X_0$.  Let $F \subseteq S_0$ be finite.  It suffices to show that 
$\sum_{\nu \in F} \alpha_\nu \phi_1(f(\nu)) = \zerovec$ whenever $\{\alpha_\nu\}_{\nu \in F}$ is a family of scalars so that $\sum_{\nu \in F} \alpha_\nu \phi_0(\nu) = \zerovec$.  So, suppose $\sum_{\nu \in F} \alpha_\nu \phi_0(\nu) = \zerovec$.  Without loss of generality, we may assume $F$ is a tree.  Let $F'$ denote the set of all terminal nodes of $F$, and let $\gamma_\nu = \sum_{\mu \sqsubseteq \nu} \beta_\mu$.  By Lemma \ref{lm:sum}, 
\[
\zerovec = \sum_{\nu \in F'} \gamma_\nu \phi_0(\nu).
\]
Since $\phi_0$ is formally $L^p$-separating and never zero, it follows that $\gamma_\nu = 0$ for each 
$\nu \in F'$.  Since $f$ is an isomorphism, $\phi_1 \circ f$ is an $L^p$-formal disintegration.  
Thus, by Lemma \ref{lm:sum} again, 
\[
\sum_{\nu \in F} \alpha_\nu \phi_1(f(\nu)) = \sum_{\nu \in F'} \gamma_\nu \phi_1(f(\nu))  \zerovec.
\] 
Thus, $T$ is well-defined.

By definition, $T$ is linear.  Since $f$ is an isomorphism, $\ran(T) = X_1$.  

We now show $T$ is isometric.  Suppose $F \subseteq S_0$ is finite and $\{\alpha_\nu\}_{\nu \in F}$ is a family of scalars.  Again, let $n = \max\{ |\nu|\ :\ \nu \in F\}$, and let $\beta_\nu$ be defined as above.  
Then, by Lemma \ref{lm:sum},
\begin{eqnarray*}
\norm{\sum_{\nu\in F} \alpha_\nu \phi_0(\nu)}_{\B_0}^p & = & \norm{\sum_{\nu \in S_0 \cap \N^n} \left( \sum_{\mu \sqsubseteq \nu} \beta_\mu\right) \phi_0(\nu)}_{\B_0}^p
\end{eqnarray*}
Since $\phi_0$ is $L^p$-formally separating, 
\begin{eqnarray*}
\norm{\sum_{\nu \in S_0 \cap \N^n} \left( \sum_{\mu \sqsubseteq \nu} \beta_\mu\right) \phi_0(\nu)}_{\B_0}^p & = & \sum_{\nu \in S_0 \cap \N^n} \left| \sum_{\mu \sqsubseteq \nu} \beta_\mu\right|^p \norm{\phi_0(\nu)}_{\B_0}^p
\end{eqnarray*}
Since $f$ is an isomorphism, 
\begin{eqnarray*}
\sum_{\nu \in S_0 \cap \N^n} \left| \sum_{\mu \sqsubseteq \nu} \beta_\mu\right|^p \norm{\phi_0(\nu)}_{\B_0}^p & = & \sum_{\nu \in S_0 \cap \N^n} \left| \sum_{\mu \sqsubseteq \nu} \beta_\mu\right|^p \norm{\phi_1(f(\nu))}_{\B_1}^p
\end{eqnarray*}
Since $\phi_1$ is $L^p$-formally separating, 
\begin{eqnarray*}
\sum_{\nu \in S_0 \cap \N^n} \left| \sum_{\mu \sqsubseteq \nu} \beta_\mu\right|^p \norm{\phi_1(f(\nu))}_{\B_1}^p & = & \norm{\sum_{\nu \in S \cap \N^n} \left(\sum_{\mu \sqsubseteq \nu} \beta_\mu\right) \phi_1(f(\nu))}_{\B_1}^p
\end{eqnarray*}
Again by Lemma \ref{lm:sum} and the definition of $T$, 
\begin{eqnarray*}
\norm{\sum_{\nu \in S \cap \N^n} \left(\sum_{\mu \sqsubseteq \nu} \beta_\mu\right) \phi_1(f(\nu))}_{\B_1}^p & = & \norm{T(\sum_{\nu\in F} \alpha_\nu \phi_0(\nu))}_{\B_1}^p
\end{eqnarray*}

Thus, $T$ extends to an isometric isomorphism of $\mathcal{B}_0$ onto $\mathcal{B}_1$.  
The uniqueness of $T$ follows from the linear density of the range of $\phi_0$.
\end{proof}

We note that Theorem \ref{thm:lifting} is an extension of Theorem 4.2 of \cite{Clanin.McNicholl.Stull.2018}.

\subsubsection{Abstract complex $L^p$ spaces}

Suppose $V$ is a real vector space.  The \emph{complexification of $V$} is the complex vector 
space over $V \times V$ in which addition is defined coordinatewise and scalar multiplication is defined by 
\[
(x + iy) (v_0, v_1) = (xv_0 - yv_1, yv_0 + xv_1).
\]

We denote the complexification of $V$ by $V_\C$.  If $\bfv = (v_0, v_1) \in V_\C$, let $\Re(\bfv) = v_0$ and $\Im(\bfv) = v_1$; in addition we denote $\bfv$ by $v_0 + i v_1$.  Let $\Re(V_\C) = V \times \{0\}$, and let $\Im(V_\C) = \{0\} \times V$.

Under certain conditions, it is possible to construct a norm on $\B_\C$ by first defining a modulus by $|\bfv| = \sup_\theta \Re(e^{i\theta}\bfv)$ (where the supremum is taken with respect to the ordering on $\B$) and then setting 
$\norm{\bfv} = \norm{|\bfv|}$.  As will be discussed in the appendix, the complexification of $L^p(\Omega; \R)$ is $L^p(\Omega; \C)$.  However, this construction is not amenable to continuous logic.  
We go around this obstacle by adding a norm on $\B_\C$ and a condition on this norm that ensures it will behave as an $L^p$ norm.   This condition is easily representable in continuous logic.

\begin{definition}\label{def:complex.abstract.Lp}
Suppose $1 \leq p < \infty$, and let $(\mathcal{B}, \leq)$ be an abstract real $L^p$-space.  
Let $\norm{\ }$ be a norm on $\mathcal{B}_\C$ so that the following hold.
\begin{enumerate}
	\item For all $v \in \mathcal{B}$, $\norm{v}_\mathcal{B} = \norm{v + i\zerovec}$.  
	
	\item If $v_0, v_1 \in \mathcal{B}$ are disjoint, then  
	$v_0 + i\zerovec$ and $v_1 + i\zerovec$ are formally $L^p$-disjointly supported (with respect to $\norm{\ }$).  i.e. 
	\[
	\norm{\alpha(v_0 + i\zerovec) + \beta(v_1 + i\zerovec)}^p = 
	|\alpha|^p \norm{v_0}_\B^p + |\beta|^p \norm{v_1}_B^p
	\]
	for all \emph{complex} scalars $\alpha, \beta$.
\end{enumerate}
We call $(\mathcal{B}_\C, \norm{\ })$ an \emph{abstract complex $L^p$ space}.
\end{definition}
If $(\mathcal{B}_\C, \norm{\ })$ is an abstract complex $L^p$ space then we also have 
$\norm{v}_\B = \norm{\zerovec + iv}$.  

\begin{theorem}\label{thm:complex.abstract.Lp}
If $(\mathcal{B}_\C, \norm{\ })$ is an abstract complex $L^p$ space, then there is a measure space $\Omega$ so that $(\mathcal{B}_\C, \norm{\ })$ is isometrically isomorphic to $L^p(\Omega; \C)$.
\end{theorem}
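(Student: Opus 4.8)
The plan is to reduce the complex statement to the real Kakutani representation theorem (Theorem~\ref{thm:kakutani}) by showing that an abstract complex $L^p$ space $(\mathcal{B}_\C, \norm{\ })$ is, in fact, the complexification of its underlying abstract real $L^p$ space $\mathcal{B}$, equipped with the intrinsic norm $\norm{\bfv} = \norm{\,|\bfv|\,}_\B$ built from the modulus $|\bfv| = \sup_\theta \Re(e^{i\theta}\bfv)$. By Kakutani, $\mathcal{B}$ is isometrically isomorphic to $L^p(\Omega;\R)$ for some measure space $\Omega$; as noted in the excerpt (and proven in the appendix), the complexification of $L^p(\Omega;\R)$ with the intrinsic modulus norm is exactly $L^p(\Omega;\C)$. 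So the whole theorem comes down to proving that the abstractly given norm $\norm{\ }$ on $\mathcal{B}_\C$ coincides with the intrinsic norm coming from the modulus.

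First I would verify that the modulus $|\bfv| = \sup_\theta \Re(e^{i\theta}\bfv)$ exists in $\mathcal{B}$ for every $\bfv \in \mathcal{B}_\C$: since $\mathcal{B}$ is an abstract real $L^p$ space, hence a (Dedekind complete) Banach lattice, one checks the relevant order-bounded family of suprema $\Re(e^{i\theta}\bfv) = \cos\theta\, v_0 - \sin\theta\, v_1$ has a least upper bound; this is a standard lattice fact, and I would cite the appendix for it. Next, on the dense subspace of $\mathcal{B}_\C$ consisting of vectors whose real and imaginary parts are finite linear combinations of pairwise disjoint positive elements of $\mathcal{B}$ (this subspace is dense because the span of disjoint positives is dense in an abstract $L^p$ space — every element is an $L^p$-limit of such combinations via the lattice structure, mirroring simple functions in $L^p(\Omega)$), I would compute both norms explicitly. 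For such a vector $\bfv = \sum_k (a_k + i b_k) u_k$ with the $u_k \geq \zerovec$ pairwise disjoint and $a_k, b_k \in \R$, condition~(2) of Definition~\ref{def:complex.abstract.Lp} applied with complex scalars gives $\norm{\bfv}^p = \sum_k (a_k^2 + b_k^2)^{p/2}\norm{u_k}_\B^p$, while a direct lattice computation of $|\bfv| = \sup_\theta \Re(e^{i\theta}\bfv) = \sum_k \sqrt{a_k^2+b_k^2}\, u_k$ (using disjointness of the $u_k$ to pull the supremum inside the sum) followed by the abstract $L^p$ identity gives the same value for $\norm{\,|\bfv|\,}_\B^p$. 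Hence the two norms agree on a dense subspace, and therefore everywhere.

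With that identification in hand, the proof concludes: apply Theorem~\ref{thm:kakutani} to get $\mathcal{B} \cong L^p(\Omega;\R)$ isometrically and as Banach lattices, observe that this isomorphism extends to the complexifications and intertwines the moduli (since the modulus is defined purely from the lattice order), and invoke the appendix's identification $(L^p(\Omega;\R))_\C \cong L^p(\Omega;\C)$ with the modulus norm. Composing these isometric isomorphisms yields $(\mathcal{B}_\C, \norm{\ }) \cong L^p(\Omega;\C)$.

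The main obstacle I anticipate is the density argument together with the supremum-interchange in the modulus computation: showing that $\sup_\theta \sum_k (\cos\theta\, a_k - \sin\theta\, b_k) u_k = \sum_k \sqrt{a_k^2+b_k^2}\, u_k$ requires genuinely using that the $u_k$ are pairwise disjoint (so that the lattice supremum decomposes componentwise), and one must be careful that the supremum over the continuum of angles $\theta$ is attained at the right place in each disjoint "band" — different $k$ want different optimal $\theta$, but disjointness is exactly what lets each band optimize independently. Establishing that the span of finite disjoint positive combinations is dense in $\mathcal{B}_\C$ also needs care: it follows from the corresponding density in the real abstract $L^p$ space (itself a consequence of Kakutani, or provable directly from Dedekind completeness and the $L^p$ norm identity), but I would want to state this cleanly, perhaps as a preliminary lemma, before running the norm comparison.
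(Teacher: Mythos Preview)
Your argument is correct but takes a genuinely different route from the paper's. The paper never touches the modulus $|\bfv| = \sup_\theta \Re(e^{i\theta}\bfv)$ at all; instead it applies Kakutani to obtain a lattice isomorphism $T : L^p(\Omega;\R) \to \mathcal{B}$, pushes $T$ forward to complex simple functions by $T_1(\sum_j \alpha_j \Ind_{A_j}) = \sum_j \alpha_j (T(\Ind_{A_j}),\zerovec)$, and then reads off directly from conditions (1)--(2) of Definition~\ref{def:complex.abstract.Lp} that $T_1$ is isometric (the $T(\Ind_{A_j})$ are disjoint in $\mathcal{B}$, hence $L^p$-formally disjointly supported in $(\mathcal{B}_\C,\norm{\ })$). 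Surjectivity then follows by approximating real and imaginary parts separately. In effect the paper performs your norm computation only on the concrete side, where the $u_k$ are literal indicator functions, and so sidesteps the existence of the lattice supremum defining $|\bfv|$, the componentwise-supremum interchange, and the common-refinement issue you flag for density. Your approach buys a cleaner conceptual statement --- the axioms of Definition~\ref{def:complex.abstract.Lp} pin down the intrinsic modulus norm uniquely --- at the cost of invoking the appendix result (which the paper explicitly labels ``non-supporting'' and does not use in its own proof) and of handling the lattice-theoretic subtleties you correctly identify. Both the density of disjoint-positive combinations and the band-wise supremum decomposition go through in a Dedekind complete lattice, so your obstacles are real but surmountable; the paper's route is simply shorter because it outsources them to the concrete measure space $\Omega$ furnished by Kakutani.
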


\begin{proof}
By Theorem \ref{thm:kakutani}, there is a measure space $\Omega$ so that there is an isometric isomorphism $T$ of $(L^p([0,1]; \R),\leq)$ onto $(\mathcal{B}, \leq)$.  Let $\Omega = (X, \mathcal{M}, \mu)$.  

For each complex simple function $s = \sum_{j \leq n} \alpha_j \Ind_{A_j}$ of $\Omega$, let 
$T_1(s) = \sum_{j \leq n} \alpha_j(T(\Ind_{A_j}), \zerovec)$.  
Since $\Ind_{A \cup B} = \Ind_A + \Ind_B$ when $A,B$ are disjoint, it follows that $T_1$ is 
well-defined.  It follows from the definition of $\mathcal{B}_\C$ that $T_1$ is linear.  The conditions of Definition \ref{def:complex.abstract.Lp} ensure that $T_1$ is an isometry.  
Thus $T_1$ has a unique extension to $L^p(\Omega; \C)$ which we denote by $T_1$ as well.  
It follows that $T_1$ is a linear isometric map of $L^p(\Omega;\C)$ into $\B_\C$.  
$T_1$ extends $T$ in the sense that if $f \in L^p(\Omega; \R)$, then 
$T_1(f) = (T(f), \zerovec)$.  

We claim that $T_1$ is surjective.  For, let $v = (v_0, v_1) \in \B_\C$.  Let 
$\epsilon > 0$.  There is a real simple function $s_k = \sum_{j < n_k} t_{k,j} \Ind_{A_{k,j}}$ 
of $\Omega$ so that $\norm{T^{-1}(v_k) - s_k}_p < \epsilon /2$.  
It follows that $\norm{(v_0,v_1) - T_1(s_0 + i s_1)} < \epsilon$.  Thus, the range of $T_1$ is dense in $L^p(\Omega; \C)$ and so $T_1$ is surjective.
\end{proof}

\section{Background and preliminaries from continuous logic}\label{sec:bak.prelim.CL}

\subsection{Background from continuous logic}\label{sec:bak.prelim.CL::subsec:bak}

We refer to sections 1 - 5 of \cite{Ben-Yaacov.Berenstein.Henson.Usvyatsov.2008} for a thorough treatment of the rudiments of continuous logic which we summarize here.
  
First-order logic was developed primarily to study algebraic and combinatorial structures.  Continuous logic is an extension of first-order logic to metric structures.  Unlike first-order logic, in continuous logic truth values range between $0$ and $1$ inclusive.  The set of connectives is larger: every continuous function $u: [0,1]^n \rightarrow [0,1]$ is regarded as a connective.  The quantifiers are `$\inf$' and `$\sup$'.  However, the equality sign is replaced by a distance symbol $\dsym$.  

A \emph{metric language} consists of relation symbols, function symbols, and constant symbols.  In addition, with each function or relation symbol $\phi$ there is associated a \emph{modulus} $\Delta_\phi : (0, \infty) \rightarrow (0,\infty)$.  Terms are built from constants, variables, and function symbols according to the usual rules.  Well-formed formulas are built from terms, $\dsym$, relation symbols, connectives, and quantifiers according to the usual rules.  A variable of a wff $\Phi$ is \emph{free} if it is not governed by any quantifier of $\Phi$.  
A \emph{sentence} is a well-formed formula (wff) with no free variables.  A \emph{theory} is a set of sentences.

With some modifications, \emph{interpretations} of metric languages are defined as in classical first-order logic. Intuitively, an interpretation of a metric language is a way of assigning meaning to its constituents.  Unlike first-order logic, the domain of an interpretation of a metric language is a complete metric space $(M,d)$ of diameter $1$.  The distance symbol $\dsym$ is interpreted by the metric $d$.  The interpretation of an $n$-ary function symbol is a uniformly continuous function from $M^n$ into $M$.  The interpretation of an $n$-ary relation symbol is a uniformly continuous map from $M^n$ into $[0,1]$.  The moduli functions must serve as moduli of continuity for interpretations of function and relation symbols.  Constants are still interpreted by constants.  Thus, each interpretation $\mathcal{A}$ of a metric language $\mathcal{L}$ assigns to each sentence $\Phi$ of $\mathcal{L}$ a truth value in $[0,1]$ which we denote by $\Phi^\mathcal{A}$.  

In general, a relation symbol will represent a distance function for some closed set; i.e. a function of the form $d(p, X) = \inf\{q \in X\ :\ d(p,q)\}$.  Thus, $0$ is regarded as representing `true' instead of $1$.  
Accordingly, we say that an interpretation $\mathcal{A}$ of a metric language $\mathcal{L}$
\emph{satisfies} a sentence $\Phi$ of $\mathcal{L}$ if $\Phi^\mathcal{A} = 0$.  An interpretation satisfies a theory if it satisfies each sentence in the theory in which case it is said to be a \emph{model} of the theory.

The compactness theorem holds in continuous logic: a theory has a model if all of its finite subsets do.

\begin{definition}\label{def:represent}
Let $\mathcal{K}$ be a class of structures.  We say that a theory $T$ of a metric language 
$\mathcal{L}$ \emph{represents} $\mathcal{K}$ if the following hold.
\begin{enumerate}
	\item For every $\B \in \mathcal{K}$, there is an interpretation $\mathcal{A}_\B$ of $\mathcal{L}$ so that $\mathcal{A}_\B \models T$.  
	
	\item If $\B_0, \B_1 \in \mathcal{K}$, then $\B_0$ is isomorphic to $\B_1$ if and only if $\mathcal{A}_{\B_0}$ is isomorphic to 
	$\mathcal{A}_{\B_1}$.
	
	\item If $\mathcal{A} \models T$, then there is a $\B_\mathcal{A} \in \mathcal{K}$ so 
	that $\mathcal{A}$ isomorphically embeds in $\mathcal{A}_{\B_\mathcal{A}}$.  
\end{enumerate}
\end{definition}

We now discuss the representation of Banach lattices in continuous logic.  We take the approach of representing the behavior of the vector space and lattice operations on the unit ball.  

$\LB$ is the metric language that consists of the following.
\begin{enumerate}
	\item For all scalars $s,t$ so that $|s| + |t| \leq 1$, a unique binary function symbol $\plus_{s,t}$. 
	
	\item A unary relation symbol $\norm{\ }$.
	
	\item A constant symbol $\zerovec$.
	
	\item $\Delta_{\plus_{s,t}}(\epsilon) = 2\epsilon$.  
	
	\item $\Delta_{\norm{\ }}(\epsilon) = \epsilon$.  
\end{enumerate}

Of course, when working with $\LB$, we write  $s \tau_1 + t \tau_2$ for $\plus_{s,t}(\tau_1, \tau_2)$ and $s \tau_1$ for $\plus_{s,1}(\tau_1, \zerovec)$.

$\LBL$ is the metric language that  consists of $\LB$ together with a family 
$\{\meet_{s,t}\}_{|s| + |t| \leq 1}$ of new and distinct binary operation symbols and a new 
unary operation symbol $|\ |$.

When working with $\LBL$ we write $(s \tau_1 \meet t \tau_2)$ for $\meet_{s,t}(\tau_1, \tau_2)$, $(s \tau_1 \join t \tau_2)$ for $-((-s)\tau_1 \meet (-t) \tau_2)$, and $\tau_1^+$ for $(1\tau_1) \vee (0\tau_1)$.

The following essentially follows from \cite{Ben-Yaacov.Berenstein.Henson.Usvyatsov.2008} and also \cite{Ben-Yaacov.2009}.

\begin{theorem}\label{thm:rep.Banach.lattice}
There is a theory $\TBL$ of $\LBL$ that represents the class of Banach lattices.
\end{theorem}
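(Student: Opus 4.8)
The plan is to follow, for the language $\LBL$, the scheme used in Sections 1--5 of \cite{Ben-Yaacov.Berenstein.Henson.Usvyatsov.2008} to represent Banach spaces in continuous logic, carrying along the lattice operations as in \cite{Ben-Yaacov.2009}; in both settings the metric structure one works with is the closed unit ball. For a Banach lattice $\mathcal{B}$, let $\mathcal{A}_\mathcal{B}$ be the $\LBL$-structure with domain the closed unit ball $\{u \in \mathcal{B} : \norm{u}_\mathcal{B} \leq 1\}$ equipped with the metric inherited from the norm (rescaled by a fixed constant if one insists on diameter exactly $1$), interpreting $\zerovec$ by $\zerovec$, $\norm{\ }$ by $u \mapsto \norm{u}_\mathcal{B}$, each $\plus_{s,t}$ by $(u,v) \mapsto su + tv$, each $\meet_{s,t}$ by $(u,v) \mapsto (su)\meet(tv)$, and $|\ |$ by $u \mapsto |u|$. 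Using $|s| + |t| \leq 1$, the triangle inequality, and $|a\meet b| \leq |a|\join|b| \leq |a| + |b|$, each of these operations maps the unit ball into itself and $\norm{\ }$ takes values in $[0,1]$; one checks that the moduli $\Delta_{\plus_{s,t}}$, $\Delta_{\meet_{s,t}}$, $\Delta_{|\ |}$, $\Delta_{\norm{\ }}$ declared in $\LBL$ are genuine moduli of uniform continuity, so $\mathcal{A}_\mathcal{B}$ is a legitimate interpretation.

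Next, let $\TBL$ be the set of $\LBL$-conditions asserting: (i) the vector-space identities that hold on the unit ball --- commutativity and the compatible associativity and distributivity relations among the symbols $\plus_{s,t}$, together with the role of $\zerovec$ --- each expressed as a universally quantified distance that is identically zero; (ii) $\sup_u \lvert \norm{u} - \dsym(u,\zerovec)\rvert = 0$, the positive-homogeneity condition $\sup_u\lvert\norm{su} - |s|\norm{u}\rvert = 0$ for $|s| \leq 1$, subadditivity of $\norm{\ }$, and $\sup_{u,v}\lvert\dsym(u,v) - 2\norm{\plus_{1/2,-1/2}(u,v)}\rvert = 0$, using the connective $t \mapsto \max(t,0)$ where truncated differences are needed; and (iii) the lattice identities for $\meet_{s,t}$ and $|\ |$ on the ball, together with the three Banach-lattice compatibility axioms: the order is preserved by translation and by multiplication by nonnegative scalars, and $\norm{u} \leq \norm{v}$ whenever $|u| \leq |v|$, the last two encoding ``$|u| \leq |v|$'' as ``$|u|\meet|v| = |u|$''. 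By the axioms of a Banach lattice, $\mathcal{A}_\mathcal{B} \models \TBL$ for every $\mathcal{B}$, which gives condition (1) of Definition \ref{def:represent}. For condition (2), a Banach-lattice isomorphism $\mathcal{B}_0 \to \mathcal{B}_1$ restricts to an $\LBL$-isomorphism of unit balls; conversely, given an $\LBL$-isomorphism $F : \mathcal{A}_{\mathcal{B}_0} \to \mathcal{A}_{\mathcal{B}_1}$, set $\tilde F(u) = n\cdot F(n^{-1}u)$ for any integer $n \geq \norm{u}_{\mathcal{B}_0}$, scalar multiplication by $n$ taken in $\mathcal{B}_1$, and note that $\tilde F$ is independent of $n$ because $F$ commutes with every $\plus_{s,t}$, and that it is a surjective Banach-lattice isometry.

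The remaining work, condition (3), is the step I expect to be the main obstacle. Given $\mathcal{A} \models \TBL$, one must produce a Banach lattice $\mathcal{B}_\mathcal{A}$ into whose unit ball $\mathcal{A}$ embeds as an $\LBL$-structure. Form $\mathcal{B}_\mathcal{A}$ as the norm completion of $(\N \times \mathcal{A})/{\sim}$, where $(n,x)$ stands for the vector ``$nx$'', $(m,x) \sim (n,y)$ iff $\plus_{\frac{m}{m+n},\frac{-n}{m+n}}(x,y) = \zerovec$ in $\mathcal{A}$, addition is $[(m,x)] + [(n,y)] = [(m+n,\, \plus_{\frac{m}{m+n},\frac{n}{m+n}}(x,y))]$, scaling by a positive rational and by $-1$ is read off likewise, $\norm{[(n,x)]} = n\,\norm{x}^{\mathcal{A}}$, $|[(n,x)]| = [(n,|x|)]$, and $[(m,x)] \meet [(n,y)] = [(m+n,\, \meet_{\frac{m}{m+n},\frac{n}{m+n}}(x,y))]$; in every case the coefficients have absolute values summing to $1$, so the needed symbols of $\LBL$ are available. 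The conditions in $\TBL$ are exactly what is required to see that $\sim$ is an equivalence relation, that these operations descend to well-defined operations on $\sim$-classes, that $\norm{\ }$ becomes a lattice norm, and that every vector-lattice and Banach-lattice axiom holds; the completion is then a Banach lattice, and $x \mapsto [(1,x)]$ is the desired embedding of $\mathcal{A}$ into $\mathcal{A}_{\mathcal{B}_\mathcal{A}}$. The only real difficulty is the bookkeeping --- checking well-definedness at each stage and that no identity is lost in passing from the bounded structure $\mathcal{A}$ to the unbounded space --- and this runs exactly parallel to the Banach-space argument of \cite{Ben-Yaacov.Berenstein.Henson.Usvyatsov.2008}, with the lattice operations adjoined as in \cite{Ben-Yaacov.2009}.
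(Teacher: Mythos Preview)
Your proposal is correct and is precisely the approach the paper intends: the paper gives no proof of this theorem beyond the remark that it ``essentially follows from \cite{Ben-Yaacov.Berenstein.Henson.Usvyatsov.2008} and also \cite{Ben-Yaacov.2009},'' and your sketch is exactly the unit-ball representation from those references, with the lattice operations adjoined. One small bookkeeping point: in your dilation construction $(\N\times\mathcal{A})/{\sim}$ you should take $n\geq 1$ rather than $n\in\N$ (the paper's $\N$ includes $0$), to avoid the ill-defined quotient $\frac{m}{m+n}$ when $m=n=0$.
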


Let $\leq$ denote the connective $\leq(s,t) = |t - \max\{s,t\}|$.  We generally use infix rather then prefix notation with this connective.

$\TLpR$ is the theory of $\LBL$ that consists of the conditions of $\TBL$ together with 
\[
\sup_{x_0,x_1} \norm{sx_0^+}^p + \norm{tx_1^+}^p \leq \norm{sx_0^+ + tx_1^+}^p
\]
whenever $0 \leq s \leq 1$ and $0 \leq t \leq 1 - s$.  

By Theorem \ref{thm:abstr.Lp.equiv}, we obtain the following.

\begin{theorem}\label{thm:ext.Lp}
$\TLpR$ represents the class of abstract real $L^p$ spaces.  
\end{theorem}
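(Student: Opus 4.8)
The plan is to check the three clauses of Definition~\ref{def:represent} with $\mathcal{K}$ the class of abstract real $L^p$ spaces, taking, for such a $\B$, the interpretation $\mathcal{A}_\B$ to be the one that Theorem~\ref{thm:rep.Banach.lattice} assigns to $\B$ viewed as a Banach lattice. Since the language $\LBL$ and the notion of isomorphism are unchanged, clause (2) is then immediate from Theorem~\ref{thm:rep.Banach.lattice}. For clause (1), unwinding the connective $\leq$ shows that, in $\mathcal{A}_\B$, the $\TLpR$ axiom for parameters $s,t$ asserts precisely that $\norm{sx_0^+}^p + \norm{tx_1^+}^p \leq \norm{sx_0^+ + tx_1^+}^p$ for all $x_0,x_1$ in the unit ball of $\B$; this is a legitimate $[0,1]$-valued formula since, with $0\leq s\leq 1$ and $0\leq t\leq 1-s$, all quantities appearing lie in $[0,1]$ (for instance $\norm{sx_0^+}^p+\norm{tx_1^+}^p\leq s^p+t^p\leq(s+t)^p\leq 1$, using $p\geq 1$). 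When $\B$ is an abstract real $L^p$ space this inequality, as $sx_0^+$ and $tx_1^+$ are positive, is a special case of Theorem~\ref{thm:abstr.Lp.equiv}; hence $\mathcal{A}_\B\models\TLpR$, which gives clause (1).

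For clause (3), let $\mathcal{A}\models\TLpR$. Then $\mathcal{A}\models\TBL$, so Theorem~\ref{thm:rep.Banach.lattice} produces a Banach lattice $\B_\mathcal{A}$ and an isomorphic embedding of $\mathcal{A}$ into $\mathcal{A}_{\B_\mathcal{A}}$; I would invoke the construction behind that theorem, in which $\B_\mathcal{A}$ is built so that (a copy of) $\mathcal{A}$ is the closed unit ball of $\B_\mathcal{A}$, with $\norm{\ }$, $\zerovec$, the operations $\plus_{s,t}$, and the lattice operations all interpreted by the restrictions of the corresponding operations of $\B_\mathcal{A}$. Granting this, for $x_0,x_1$ in the unit ball of $\B_\mathcal{A}$ the value in $\mathcal{A}$ of the term inside the supremum of the $\TLpR$ axiom for parameters $s,t$ is $\bigl(\norm{sx_0^+}_{\B_\mathcal{A}}^p + \norm{tx_1^+}_{\B_\mathcal{A}}^p - \norm{sx_0^+ + tx_1^+}_{\B_\mathcal{A}}^p\bigr)^+$, which is therefore $0$ for all such $x_0,x_1$. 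I would then promote this to all positive $u,v\in\B_\mathcal{A}$ by normalization: the target inequality $\norm{u+v}^p\geq\norm{u}^p+\norm{v}^p$ is trivial when $u$ or $v$ is $\zerovec$, and otherwise, putting $c=\norm{u}+\norm{v}>0$, $s=\norm{u}/c$, $t=\norm{v}/c$ (so $0\leq s\leq 1$, $t=1-s$), $x_0=u/\norm{u}$, $x_1=v/\norm{v}$, one has positive unit vectors with $x_0^+=x_0$, $x_1^+=x_1$, $sx_0^+=u/c$, $tx_1^+=v/c$, and the $\TLpR$ axiom applied to these parameters, after clearing $c^{-p}$, yields the inequality. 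Thus $\B_\mathcal{A}$ satisfies the criterion of Theorem~\ref{thm:abstr.Lp.equiv} and so is an abstract real $L^p$ space, which establishes clause (3).

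The only genuinely delicate step is clause (3): one must read off from the proof of Theorem~\ref{thm:rep.Banach.lattice} how $\B_\mathcal{A}$ is reconstructed from $\mathcal{A}$ — in particular that $\mathcal{A}$ recovers the unit ball of $\B_\mathcal{A}$ together with its positive cone and the restricted vector and lattice operations — so that the $\TLpR$ axioms, which only ever refer to vectors of the shape $sx_0^+$ with $x_0$ in the unit ball, can be upgraded, via the homogeneity argument above, to the global inequality over all positive $u,v\in\B_\mathcal{A}$ demanded by Theorem~\ref{thm:abstr.Lp.equiv}. Clauses (1) and (2), and the verification that the axioms are $[0,1]$-valued, are routine.
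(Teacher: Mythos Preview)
Your proposal is correct and follows the same route as the paper, which simply states that the theorem follows from Theorem~\ref{thm:abstr.Lp.equiv}; you have merely unpacked how the three clauses of Definition~\ref{def:represent} are verified, with the homogeneity/normalization argument in clause~(3) being exactly the step needed to pass from the unit-ball axioms to the global inequality of Theorem~\ref{thm:abstr.Lp.equiv}. The one point you rightly flag---that the $\B_\mathcal{A}$ produced by Theorem~\ref{thm:rep.Banach.lattice} may be taken to have (a copy of) $\mathcal{A}$ as its unit ball---is standard for this representation and is implicit in the paper as well.
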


\subsection{Preliminaries from continuous logic}\label{sec:bak.prelim.CL::subsec:prelim}

Let $\LLpC$ consist of $\LBL$ together with a new binary predicate symbol 
$\norm{\ }_+$.  Our goal in this subsection is to prove the following.

\begin{theorem}\label{thm:complex.Lp.rep}
There is a theory $\TLpC$ of $\LLpC$ that represents the class of abstract complex $L^p$ spaces.  
\end{theorem}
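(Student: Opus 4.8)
The plan is to build the theory $\TLpC$ by augmenting $\TLpR$ with axioms that encode the new predicate $\norm{\ }_+$ as the complex norm of Definition \ref{def:complex.abstract.Lp}. Think of a model of $\LLpC$ as carrying a Banach lattice $\B$ (the interpretation of the $\LBL$-part) together with a second norm-like function $\norm{\ }_+$ which is meant to be interpreted, on a vector $v$ of $\B$, as $\norm{v + iv'}$ for some fixed associated imaginary partner — but since continuous logic only sees the real structure directly, the cleanest encoding is to have $\norm{\ }_+$ be a \emph{binary} predicate $\norm{v_0, v_1}_+$ representing $\norm{v_0 + iv_1}$ on $\B_\C$. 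First I would write down the $\LBL$-conditions of $\TLpR$ (so the real reduct is forced to be an abstract real $L^p$ space, hence a real $L^p$ space by Theorems \ref{thm:abstr.Lp.equiv} and \ref{thm:kakutani}), and then add: (i) axioms asserting $\norm{\ }_+$ is a norm on the complexification — homogeneity under real scalars, the triangle inequality, and $\norm{v,0}_+ = \norm{v}$ — each expressed as a $\sup$-sentence over the unit ball using the connectives available; (ii) an axiom asserting the complex-homogeneity/rotation relation $\norm{v_0,v_1}_+ = \norm{(\cos\theta) v_0 - (\sin\theta) v_1,\ (\sin\theta) v_0 + (\cos\theta) v_1}_+$, which it suffices to impose for a dense set of $\theta$ by uniform continuity; and (iii) the disjointness axiom: whenever $x_0, x_1$ are lattice-disjoint (which, as in $\TLpR$, is captured by working with $x_0^+, x_1^+$ and the identity $x_0^+ \wedge x_1^+ = \zerovec$), the $p$-additivity $\norm{\alpha(x_0^+,0) + \beta(x_1^+,0)}_+^p = |\alpha|^p\norm{x_0^+}^p + |\beta|^p\norm{x_1^+}^p$ holds for complex scalars $\alpha,\beta$ — again enough for a dense set of $\alpha,\beta$ in the closed unit bidisc, with the moduli of continuity taking care of the rest. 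All of these are bounded-quantifier sentences built from continuous connectives, so they are legitimate $\LLpC$-sentences once we assign each a modulus (e.g. $\Delta_{\norm{\ }_+}(\epsilon) = \epsilon$).

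Next I would verify the three clauses of Definition \ref{def:represent}. For clause (1): given an abstract complex $L^p$ space $(\B_\C, \norm{\ })$, the underlying $\B$ is an abstract real $L^p$ space, so it has an $\LBL$-interpretation $\mathcal{A}_\B \models \TLpR$ by Theorem \ref{thm:ext.Lp}; extend it to an $\LLpC$-interpretation by interpreting $\norm{v_0,v_1}_+$ as $\norm{v_0 + iv_1}$. Uniform continuity of this map on the unit ball follows from the norm axioms (the triangle inequality gives $1$-Lipschitzness). Each axiom of $\TLpC$ is then satisfied: the norm axioms are immediate; the rotation axiom holds because in $\B_\C$ scalar multiplication by $e^{i\theta}$ is an isometry (this is part of what "norm on $\B_\C$" means, together with condition (1) of Definition \ref{def:complex.abstract.Lp} — I should double-check that rotation-invariance of $\norm{\ }$ is actually forced by the two conditions in that definition, or else add it there); and the disjointness axiom is exactly condition (2). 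For clause (3): given $\mathcal{A} \models \TLpC$, its $\LBL$-reduct is a model of $\TLpR$, so by Theorem \ref{thm:ext.Lp} it isomorphically embeds in $\mathcal{A}_\B$ for some abstract real $L^p$ space $\B$; define $\norm{\ }$ on $\B_\C$ using the values of $\norm{\ }_+$ in $\mathcal{A}$ on the dense real part and extend by continuity, check it is a norm and satisfies the two conditions of Definition \ref{def:complex.abstract.Lp} (each condition corresponds to an axiom of $\TLpC$), and then $(\B_\C, \norm{\ })$ is an abstract complex $L^p$ space into which $\mathcal{A}$ embeds. For clause (2), isomorphism of abstract complex $L^p$ spaces is the same as isomorphism of the complexifications as normed $\C$-vector spaces with the lattice structure on the real part; a real-linear lattice-and-norm isomorphism $\B \to \B'$ extends uniquely to a $\C$-linear map $\B_\C \to \B'_\C$, and it is $\norm{\ }$-isometric iff it is $\norm{\ }_+$-isometric as an $\LLpC$-map — so isomorphism on one side matches isomorphism on the other.

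The main obstacle I expect is bookkeeping around the \emph{moduli of continuity} and the reduction to dense parameter sets. Continuous logic requires every predicate (in particular $\norm{\ }_+$) to come with a prescribed modulus that is genuinely a modulus of continuity in \emph{every} model, and I must make sure the axioms I write down actually force $\norm{\ }_+$ to satisfy $\Delta_{\norm{\ }_+}$; the triangle-inequality axiom does this, but I need to confirm that the value of $\norm{\ }_+$ is a priori bounded (one works on the unit ball, so this is fine after the usual rescaling conventions of $\LB$). Relatedly, each axiom must be a single $\LLpC$-sentence, so continuous families like "for all $\theta$" or "for all complex $\alpha,\beta$ with $|\alpha|+|\beta|\le 1$" have to be handled either by an explicit $\sup$ over an auxiliary real parameter (not available as a sort here) or — the route I would take — by imposing the identity only for each rational $\theta$ (respectively each pair of Gaussian-rational $\alpha,\beta$), one sentence per value, and arguing that uniform continuity propagates the identity to all real parameters in every model. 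A secondary subtlety is making sure that "disjoint in the lattice sense" is expressed purely syntactically; this is already done in $\TLpR$ via $x_0^+, x_1^+$ and I would mirror that device exactly. None of this is deep, but it is where the proof has to be written carefully rather than sketched.
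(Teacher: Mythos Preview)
Your overall architecture is sound, but there is a real gap in step (iii), and it stems from a misreading of how $\TLpR$ works. In $\TLpR$ the paper does \emph{not} express ``$x_0$ and $x_1$ are lattice-disjoint'' at all. The sentence
\[
\sup_{x_0,x_1}\ \norm{sx_0^+}^p + \norm{tx_1^+}^p \leq \norm{sx_0^+ + tx_1^+}^p
\]
is a universal statement about \emph{all} nonnegative vectors (the positive parts $x_0^+,x_1^+$ just range over the positive cone; they are not disjoint in general). Its equivalence with the abstract $L^p$ axiom is the content of Theorem~\ref{thm:abstr.Lp.equiv}, which is precisely what lets one avoid writing a conditional ``if $|x_0|\wedge|x_1|=\zerovec$ then \ldots'' in continuous logic. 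So ``mirroring that device exactly'' does not give you a way to impose condition~(2) of Definition~\ref{def:complex.abstract.Lp}.

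The honest obstacle is that condition~(2) is an implication with hypothesis $|v_0|\wedge|v_1|=\zerovec$, and continuous logic has no direct implication; one needs a uniform quantitative version. The paper supplies exactly that: Lemma~\ref{lm:ds.comp} shows that in any concrete $L^p$ space, any pair $v_0,v_1$ is within $\norm{\,|v_0|\wedge|v_1|\,}$ of a genuinely disjoint pair, and Lemma~\ref{lm:complex.cond} packages condition~(2) (and condition~(1)) as the single $\inf$--$\sup$ identity~(\ref{eqn:complex.cond}), which is a legitimate $\LLpC$-sentence for each fixed $\alpha,\beta$. The ``if disjoint then equal'' becomes ``for every $v_0,v_1$ there exist $u_0,u_1$ close to them (with closeness controlled by $\norm{\,|v_0|\wedge|v_1|\,}$) for which the $p$-additivity holds exactly.'' Your proposal needs either this approximation trick or some other uniform substitute; without it, step~(iii) is not a continuous-logic sentence and the representation claim does not go through.
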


We divide the proof into the following two lemmas.

\begin{lemma}\label{lm:ds.comp}
If $f_0, f_1 \in L^p(\Omega; \F)$, then there exist disjointly supported $g_0, g_1 \in L^p(\Omega; \F)$ 
so that 
\[
\max\{\norm{g_0 - f_0}_p, \norm{g_1 - f_1}_p\} \leq \norm{|f_0| \meet |f_1|}_p.
\]
\end{lemma}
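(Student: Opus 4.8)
The plan is to split each $f_j$ onto a piece of $\Omega$ chosen so that the two pieces are disjoint, discarding only the part where the two functions ``overlap'' in modulus. Concretely, I would set
\[
A = \{ t \in X\ :\ |f_0(t)| \geq |f_1(t)| \},
\]
which is measurable since $|f_0|$ and $|f_1|$ are measurable (in the case $\F = \C$, $|f_j|$ denotes the pointwise modulus of $f_j$). Then put $g_0 = f_0 \cdot \Ind_A$ and $g_1 = f_1 \cdot \Ind_{X \setminus A}$. By construction $g_0$ vanishes off $A$ while $g_1$ vanishes on $A$, so $g_0$ and $g_1$ are disjointly supported, and both lie in $L^p(\Omega;\F)$ since $|g_j| \leq |f_j|$ pointwise.

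It then remains to estimate $\norm{g_j - f_j}_p$. First I would observe that $f_0 - g_0 = f_0 \cdot \Ind_{X \setminus A}$, and on $X \setminus A$ we have $|f_0(t)| < |f_1(t)|$, hence $|f_0(t)| = \min\{|f_0(t)|, |f_1(t)|\} = (|f_0| \meet |f_1|)(t)$ there; off $X \setminus A$ the difference $f_0 - g_0$ vanishes while $|f_0| \meet |f_1|$ is nonnegative. Therefore $|f_0 - g_0| \leq |f_0| \meet |f_1|$ pointwise everywhere, and integrating the $p$-th powers gives $\norm{f_0 - g_0}_p \leq \norm{|f_0| \meet |f_1|}_p$. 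The symmetric argument — now using $f_1 - g_1 = f_1 \cdot \Ind_A$ together with the inequality $|f_1(t)| \leq |f_0(t)|$ on $A$ — yields $\norm{f_1 - g_1}_p \leq \norm{|f_0| \meet |f_1|}_p$, and taking the maximum of the two bounds completes the proof.

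I do not expect any serious obstacle; the only points requiring a word of care are the measurability of $A$ (immediate from measurability of the moduli) and the observation that the same construction works verbatim for $\F = \R$ and $\F = \C$, since the argument uses only the pointwise modulus and never the lattice structure of $L^p(\Omega;\F)$. Indeed, one could split according to any measurable $A$ with $\{t : |f_0(t)| > |f_1(t)|\} \subseteq A \subseteq \{t : |f_0(t)| \geq |f_1(t)|\}$, so the behaviour on the tie set $\{t : |f_0(t)| = |f_1(t)|\}$ is irrelevant.
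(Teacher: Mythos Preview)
Your proof is correct and follows essentially the same approach as the paper: define $g_j$ by restricting $f_j$ to the set where $|f_j|$ dominates, then bound $|f_j - g_j|$ pointwise by $|f_0|\meet|f_1|$. The only cosmetic difference is the handling of the tie set $\{|f_0| = |f_1|\}$ (the paper zeroes out both $g_0$ and $g_1$ there, while you assign it to $g_0$), which --- as you yourself note --- is immaterial.
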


\begin{proof}
Let $\Omega = (X, \mathcal{M}, \mu)$.  For each $t \in X$, let 
\[
g_j(t) = \left\{ \begin{array}{cc}
		0 & \mbox{if $|f_j(t)| \leq |f_{1 - j}(t)|$}\\
		f_j(t) & \mbox{otherwise}\\
		\end{array}
		\right.
\]
It follows that $g_0$ and $g_1$ are disjointly supported.  If $t \in X$, then 
$|f_j (t)- g_j(t)|^p \leq \min\{|f_0(t)|^p, |f_1(t)|^p\}$.  Thus, $\norm{g_j - f_j}_p \leq \norm{|f_0| \meet |f_1|}_p$.  
\end{proof}

\begin{lemma}\label{lm:complex.cond}
Suppose that $(\B, \leq)$ is a Banach lattice and that $\norm{\ }$ is a norm on $\mathcal{B}_\C$.  Then, $(\B_\C, \norm{\ })$ is a complex abstract $L^p$ space if and only if for each $v_0, v_1 \in \B$ and $\alpha, \beta \in \C$, 
\begin{equation}
\inf_{u_0, u_1} \max\{
G_0(u_0, u_1,v_0,v_1), G_{1, \alpha, \beta}(u_0,u_1,v_0,v_1), G_2(v_0)\} = 0 \label{eqn:complex.cond}
\end{equation}
where:
\begin{eqnarray*}
G_0(u_0,u_1,v_0,v_1) & = & \leq(\max\{\norm{v_0 - u_0}_\B, \norm{v_1 - u_1}_\B\}, \norm{|v_0| \wedge |v_1|})\\ 
G_{1, \alpha, \beta}(u_0,u_1,v_0,v_1) & = & |\norm{\alpha (u_0+i\zerovec) + \beta (u_1+i\zerovec)}^p - |\alpha|^p \norm{u_0}_\B^p - |\beta|^p \norm{u_1}_\B^p|\\
G_2(v_0) & =& |\norm{v_0 + i\zerovec} - \norm{v_0}_\B|
\end{eqnarray*}
\end{lemma}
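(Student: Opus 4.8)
The plan is to prove the two directions separately, in each case unwinding the condition \eqref{eqn:complex.cond} into a statement about the existence of vectors $u_0, u_1 \in \B$ approximating $v_0, v_1$, and then matching this against the two clauses of Definition \ref{def:complex.abstract.Lp}. The key observation is that, because $\leq(s,t) = 0$ exactly when $s \leq t$, the quantity $G_0(u_0,u_1,v_0,v_1)$ vanishes precisely when $\max\{\norm{v_0 - u_0}_\B, \norm{v_1 - u_1}_\B\} \leq \norm{|v_0| \wedge |v_1|}_\B$; the quantity $G_2(v_0)$ vanishes precisely when $\norm{v_0 + i\zerovec} = \norm{v_0}_\B$; and $G_{1,\alpha,\beta}$ vanishes precisely when the displayed $L^p$-disjointness identity holds for the pair $u_0, u_1$ and the scalars $\alpha, \beta$. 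So \eqref{eqn:complex.cond} asserts: for all $v_0, v_1, \alpha, \beta$ and all $\epsilon > 0$ there exist $u_0, u_1 \in \B$ with $\norm{v_k - u_k}_\B \leq \norm{|v_0|\meet|v_1|}_\B$, with the $L^p$-disjointness identity holding for $u_0, u_1, \alpha, \beta$ up to $\epsilon$, and with $\norm{v_0 + i\zerovec} = \norm{v_0}_\B$ up to $\epsilon$ — but note $G_2(v_0)$ does not depend on $u_0, u_1$, so its vanishing (in the infimum, hence exactly) is equivalent to condition (1) of Definition \ref{def:complex.abstract.Lp} at the vector $v_0$.

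For the forward direction, assume $(\B_\C, \norm{\ })$ is an abstract complex $L^p$ space. Then condition (1) of Definition \ref{def:complex.abstract.Lp} gives $\norm{v_0 + i\zerovec} = \norm{v_0}_\B$ outright, so $G_2(v_0) = 0$. Fixing $v_0, v_1, \alpha, \beta$, I would invoke the Kakutani representation (Theorem \ref{thm:kakutani}) to realize $(\B, \leq)$ as a concrete $L^p(\Omega;\R)$, transport $v_0, v_1$ to functions $f_0, f_1$, and apply Lemma \ref{lm:ds.comp} to produce disjointly supported $g_0, g_1$ with $\norm{g_k - f_k}_p \leq \norm{|f_0|\meet|f_1|}_p$. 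Transporting the $g_k$ back to vectors $u_0, u_1 \in \B$, the first inequality gives $G_0 = 0$; and since $u_0, u_1$ are disjoint in $\B$, condition (2) of Definition \ref{def:complex.abstract.Lp} gives exactly the identity making $G_{1,\alpha,\beta}(u_0,u_1,v_0,v_1) = 0$. Hence the infimum is $0$. For the reverse direction, assume \eqref{eqn:complex.cond}. Condition (1) is immediate as noted. For condition (2), let $v_0, v_1 \in \B$ be disjoint, i.e. $|v_0|\meet|v_1| = \zerovec$, and fix $\alpha, \beta \in \C$. Then $\norm{|v_0|\meet|v_1|}_\B = 0$, so any $u_0, u_1$ witnessing a small value of the max in \eqref{eqn:complex.cond} must satisfy $\norm{v_k - u_k}_\B = 0$, i.e. $u_k = v_k$; therefore $G_{1,\alpha,\beta}(v_0, v_1, v_0, v_1)$ can be made arbitrarily small, hence is $0$, which is precisely the disjointness identity of condition (2). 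Thus $(\B_\C, \norm{\ })$ is an abstract complex $L^p$ space.

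The main thing to be careful about is the interplay between "$\inf = 0$" and "equals $0$": for $G_2$ and for the $G_0/G_{1,\alpha,\beta}$ pair in the degenerate (disjoint) case, vanishing of the infimum forces exact vanishing because the relevant expressions are continuous and, once $\norm{|v_0|\meet|v_1|}_\B = 0$ pins down $u_k = v_k$, there is no remaining freedom; in the non-degenerate forward direction we genuinely only need $\inf = 0$, and Lemma \ref{lm:ds.comp} together with condition (2) of Definition \ref{def:complex.abstract.Lp} supplies an exact witness anyway. A secondary point is to confirm that the modulus $|v_0| \meet |v_1|$ appearing inside $G_0$ is formed using the lattice operations of $\B$ (it is), so that Lemma \ref{lm:ds.comp}, stated for concrete $L^p$ functions, can be applied after the Kakutani identification; this is routine since isometric lattice isomorphisms preserve $\meet$, $|\cdot|$, and the norm. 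No genuine obstacle is anticipated beyond this bookkeeping.
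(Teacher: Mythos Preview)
Your proposal is correct and follows essentially the same route as the paper: the forward direction via a concrete $L^p$ realization plus Lemma~\ref{lm:ds.comp}, and the reverse direction by specializing to disjoint $v_0,v_1$ so that $G_0$ reduces to $\max_k\norm{v_k-u_k}_\B$. One imprecision to fix: in the reverse direction you write that witnesses to a small value of the max ``must satisfy $\norm{v_k - u_k}_\B = 0$, i.e.\ $u_k = v_k$,'' but a witness to value $<\epsilon$ only gives $\norm{v_k-u_k}_\B<\epsilon$; the paper handles this by taking a sequence $u_{k,n}\to v_k$ with $G_{1,\alpha,\beta}(u_{0,n},u_{1,n})<2^{-n}$ and passing to the limit using continuity of $G_{1,\alpha,\beta}$, which is exactly the continuity argument you gesture at in your final paragraph but should make explicit.
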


\begin{proof}
If $(\B_\C, \norm{\ })$ is a complex abstract $L^p$ space, then (\ref{eqn:complex.cond}) follows from 
Theorem \ref{thm:complex.abstract.Lp} and Lemma \ref{lm:ds.comp}.  Suppose (\ref{eqn:complex.cond}) holds for all $v_0, v_1 \in \B$ and $\alpha, \beta \in \C$.  
We immediately obtain that $\norm{v + i\zerovec} = \norm{v}_\B$.  
Let $v_0, v_1 \in \B$ be disjoint.  
Thus, $G_0(u_0, u_1, v_0, v_1) = \max\{\norm{v_0 - u_0}_\B, \norm{v_1 - u_1}_\B\}$.  
Hence, for each 
$n \in \N$, there exist $u_{0,n}, u_{1,n} \in \B$ so that 
$\norm{v_j - u_{j,n}} < 2^{-n}$ and so that 
\[
|\norm{\alpha (u_{0,n}+i\zerovec) + \beta (u_{1,n}+i\zerovec)}^p - |\alpha|^p \norm{u_{0,n}}_\B^p - |\beta|^p \norm{u_{1,n}}_\B^p| < 2^{-n}.
\]
Thus, on the one hand, $v_j = \lim_n u_{j,n}$.  
We can then infer that, $v_0 + i\zerovec$, $v_1 + i \zerovec$ are formally $L^p$-disjointly supported.
\end{proof}

It is now fairly straightforward to formulate, for each $\alpha, \beta \in \C$ with $|\alpha| + |\beta| \leq 1$, a sentence of $\LLpC$ that asserts (\ref{eqn:complex.cond}) holds for all $v_0, v_1$.  Theorem \ref{thm:complex.Lp.rep} now follows.

\section{Proof of Theorem \ref{thm:main}}\label{sec:proof}

We first consider the real case.  
Suppose $1 \leq p \leq r \leq 2$.  Let $\mathcal{L}^+$ be the language formed by adjoining a countable 
family $\{c_\sigma\}_{\sigma \in \{0,1\}^*}$ of new and distinct constant symbols to $\LBL$.  

We define sentences of $\mathcal{L}^+$ as follows.  Let $\sigma \in \{0,1\}^*$.   When $|s| \leq 1/2$, let 
\[
\Phi_{\sigma,s}  =  \dsym(sc_\sigma, sc_{\sigma\cat(0)} + sc_{\sigma\cat(1)}).
\]
When $|s| + |t| \leq 1$, let 
\[
\Psi_{\sigma,s,t}  =  | \norm{s c_{\sigma\cat(0)} + t c_{\sigma\cat(1)}}^r - |s|^r\norm{c_{\sigma\cat(0)}}^r - |t|^r\norm{c_{\sigma\cat(1)}}^r|
\]
Finally, let $\Gamma_\sigma  = |\norm{c_\sigma} - 2^{-|\sigma|/r}|$.  Informally speaking, the sentences 
$\Phi_{\sigma,s}$ and $\Phi_{\sigma,s,t}$ altogether say that the map $\sigma \mapsto c_\sigma$ is an $L^r$-formal disintegration.

Let 
\[
T_n = T_{L^p}\ \cup 
\{\Phi_{\sigma,s}\ :\ \sigma \in \{0,1\}^{\leq n}\ \&\ |s| \leq 1/2\}
\]
\[
 \cup\ 
\{\Psi_{\sigma,s,t}\ :\ \sigma \in \{0,1\}^{\leq n}\ \&\ |s| + |t| \leq 1\}
\]
\[
 \cup\ 
\{\Gamma_\sigma\ :\ \sigma \in \{0,1\}^{\leq n}\}.
\]
We then let $T = \bigcup_n T_n$.

We divide the rest of the proof into the following lemmas.

\begin{lemma}\label{lm:Tn.sat}
$T_n$ is satisfiable.
\end{lemma}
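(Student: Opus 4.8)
The plan is to exhibit a single concrete model of $T_n$, namely $L^p([0,1];\R)$ with its natural Banach lattice structure, together with a suitable interpretation of the finitely many constant symbols $c_\sigma$ for $\sigma \in \{0,1\}^{\leq n}$. Since $L^p([0,1];\R)$ is an abstract real $L^p$ space (indeed a genuine $L^p$ space), Theorem \ref{thm:ext.Lp} guarantees it carries an interpretation of $\LBL$ satisfying $T_{L^p}$; the whole content of the lemma is therefore to choose the constants $c_\sigma$ so that the sentences $\Phi_{\sigma,s}$, $\Psi_{\sigma,s,t}$, and $\Gamma_\sigma$ all evaluate to $0$ for $|\sigma| \leq n$.

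To choose the constants, I would use the isometric embedding of $\ell^r(\R)$ into $L^p([0,1];\R)$ supplied by Theorem \ref{thm:lr.Lp}: let $\{h_k\}_{k\in\N}$ be the images of the standard basis vectors, so the $h_k$ are $L^p$-formally disjointly supported unit vectors spanning a copy of $\ell^r$. Now define $c_\sigma$ by distributing mass down the binary tree in an $\ell^r$-additive way. Concretely, enumerate the $2^n$ strings of length exactly $n$ as $\sigma_0,\dots,\sigma_{2^n-1}$, and for such a string set $c_{\sigma_k} = 2^{-n/r}\, h_k$; then for shorter strings define $c_\sigma = \sum_{k : \sigma_k \sqsupseteq \sigma} 2^{-n/r} h_k$, working up the tree. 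This forces the summative identity $c_\sigma = c_{\sigma\cat(0)} + c_{\sigma\cat(1)}$, hence $\Phi_{\sigma,s}^{\mathcal A} = 0$ for all $s$; and because distinct subtrees of the tree of height $n$ collect disjoint blocks of the $h_k$'s, which are $L^r$-formally disjointly supported, the children $c_{\sigma\cat(0)}$ and $c_{\sigma\cat(1)}$ are $L^r$-formally disjointly supported, giving $\Psi_{\sigma,s,t}^{\mathcal A}=0$. Finally, by $\ell^r$-additivity one computes $\norm{c_\sigma}^r = (\#\{k : \sigma_k \sqsupseteq \sigma\})\cdot 2^{-n} = 2^{n - |\sigma|}\cdot 2^{-n} = 2^{-|\sigma|}$, so $\norm{c_\sigma} = 2^{-|\sigma|/r}$ and $\Gamma_\sigma^{\mathcal A} = 0$.

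The only mild subtlety is bookkeeping with the interpretation conventions of $\LBL$: the sentences are phrased with scaled variables like $sc_\sigma$ and with the $\plus_{s,t}$ symbols, and one must check that after unwinding these abbreviations the asserted equalities are exactly the summativity, $L^r$-formal disjointness, and norm-value identities established above. I expect this to be routine, so the substantive step — and the only place where genuine input is needed — is invoking Theorem \ref{thm:lr.Lp} to obtain the $L^r$-formally disjointly supported unit vectors $h_k$ inside $L^p([0,1];\R)$; everything else is a finite combinatorial assignment of constants on the tree $\{0,1\}^{\leq n}$ plus an appeal to Theorem \ref{thm:ext.Lp} for the ambient $T_{L^p}$-structure. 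In the complex case the identical argument goes through using the complex version of Theorem \ref{thm:lr.Lp} together with Theorem \ref{thm:complex.Lp.rep} in place of Theorem \ref{thm:ext.Lp}.
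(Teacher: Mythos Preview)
Your proposal is correct and follows essentially the same route as the paper: work in $L^p([0,1];\R)$, use Theorem~\ref{thm:lr.Lp} to obtain images $h_k$ of the standard $\ell^r$-basis, assign the leaf constants $c_\sigma$ for $|\sigma|=n$ to be $2^{-n/r}h_k$, define the remaining constants by summing upward, and then verify $\Phi_{\sigma,s}$, $\Psi_{\sigma,s,t}$, $\Gamma_\sigma$ exactly as you describe. The paper's only additional cosmetic step is to use the base-$2$ value $\nu(\sigma)$ to index the $h_k$'s and to set $c_\sigma^{\mathcal A'}=\zerovec$ for $|\sigma|>n$ so that every constant of $\mathcal L^+$ receives an interpretation.
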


\begin{proof}
Let $\mathcal{A}$ denote the interpretation of $\LBL$ induced by the Banach lattice $L^p[0,1]$. 

By Theorem \ref{thm:lr.Lp}, there is an isometric embedding $S$ of $\ell^r$ into $L^p[0,1]$.  Set $f_j = S(e_j)$ (where $\{e_0, e_1, \ldots\}$ is the standard basis for $\ell^r$).  

When $\sigma \in \{0,1\}^n$, let $\nu(\sigma)$ be the number represented in base 2 by $\sigma$; that is $\nu(\sigma) = \sum_{j <n} \sigma(j)2^{-j}$.  
Let $\mathcal{A}'$ extend $\mathcal{A}$ to $\mathcal{L}$ by setting
\[
c_\sigma^{\mathcal{A}'} = \left\{
\begin{array}{cc}
\zerovec^\mathcal{A} & |\sigma| > n\\
2^{-n/r}f_{\nu(\sigma)} & |\sigma| = n\\
c_{\sigma\cat(0)}^{\mathcal{A}'} + c_{\sigma\cat(1)}^{\mathcal{A}'} & |\sigma| < n\\
\end{array}
\right.
\]
We show that $\mathcal{A}' \models T_n$.  
It immediately follows from the definition of $\mathcal{A}'$ that $\mathcal{A}' \models \Phi_{\sigma,s}$ and 
$\mathcal{A}' \models \Gamma_\sigma$.  \\

To see that $\mathcal{A}' \models \Psi_{\sigma,s,t}$, we first observe that 
\begin{equation}
c_\sigma^{\mathcal{A}'} = \sum_{|\tau| = n, \tau \sqsupseteq \sigma} c_\tau^{\mathcal{A}'}.\label{eqn:sum.leaves} 
\end{equation}
whenever $|\sigma| \leq n$.  

We now note that $\langle f_j \rangle_{j \in A}$ and $\langle f_j \rangle_{j \in B}$ are $L^r$-formally disjointly supported whenever $A,B \subseteq \N$ are disjoint.  For, if $A,B \subseteq \N$ are disjoint, then $\langle e_j \rangle_{j \in A}$, $\langle e_j \rangle_{j \in B}$ are disjointly supported.  
Since $S$ is an isometric embedding it follows that 
$\langle f_j \rangle_{j \in A}$ and $\langle f_j \rangle_{j \in B}$ are $L^r$-formally disjointly supported.  

We now claim that if $\sigma, \tau \in 2^{\leq n}$ are incomparable, then $c_\sigma^{\mathcal{A}'}$ and 
$c_\tau^{\mathcal{A}'}$ are $L^r$-formally disjointly supported.  For, suppose $\sigma, \tau \in \{0,1\}^{\leq n}$ are incomparable.  By the definition of $\nu$, 
$\{\nu(\tau')\ :\ \tau' \sqsupseteq \sigma\ \wedge\ |\tau'| = n\}$ and
$\{\nu(\tau')\ :\ \tau' \sqsupseteq \tau \ \wedge\ |\tau'| = n\}$ are disjoint.
Thus, 
$\langle \{c_{\tau'}^{\mathcal{A}'}\ :\ \tau' \sqsupseteq \sigma\ \wedge\ |\tau'| = n\} \rangle$ 
and $\langle \{c_{\tau'}^{\mathcal{A}'}\ :\ \tau' \sqsupseteq \tau \ \wedge\ |\tau'| = n\} \rangle$
are $L^r$-formally disjointly supported.  It then follows from (\ref{eqn:sum.leaves}) that $c_\sigma^{\mathcal{A}'}$ and 
$c_\tau^{\mathcal{A}'}$ are $L^r$-formally disjointly supported. 

Thus, $\mathcal{A}' \models \Psi_{\sigma,s,t}$.  
\end{proof}

Hence, by the compactness theorem, $T$ is satisfiable.  Let $\mathcal{A} \models T$.  By Theorem \ref{thm:ext.Lp},
there is an abstract real $L^p$ space $\mathcal{B}$ so that there is an isomorphic embedding $F$ of $\mathcal{A}$ into $\mathcal{A}_\mathcal{B}$.  By the Theorem \ref{thm:kakutani} we can (and do) assume $\mathcal{B}$ is a real $L^p$ space.  
Let $\phi(\sigma) = F(c_\sigma^\mathcal{A})$, and let $\mathcal{B}_0$ denote the closed linear span of $\ran(\phi)$.  

\begin{lemma}\label{lm:form.disint}
$\phi$ is an $L^r$-formal disintegration of $\mathcal{B}_0$.  
\end{lemma}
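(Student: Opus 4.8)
The goal is to verify that $\phi$ satisfies each of the four defining properties of an $L^r$-formal disintegration: summative, formally $L^r$-separating, never zero, and linearly dense range. The idea is that $\phi$ inherits all of these from the sentences in $T$ satisfied by $\mathcal{A}$ together with the fact that $F$ is an isomorphic embedding (hence in particular an isometry preserving the vector-space operations). Since $F$ is a linear isometric embedding of $\mathcal{A}$ into $\mathcal{A}_\mathcal{B}$, it preserves norms and linear combinations exactly, so each algebraic or metric identity witnessed in $\mathcal{A}$ transfers verbatim to $\ran(\phi) \subseteq \mathcal{B}$.

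First I would treat linear density: by definition $\mathcal{B}_0$ is the closed linear span of $\ran(\phi)$, so this is immediate. For \emph{summativity}, I would use that $\mathcal{A} \models \Phi_{\sigma,s}$ for all $\sigma \in \{0,1\}^*$ and all $|s| \leq 1/2$; taking $s = 1/2$ (and using linearity/homogeneity, or noting $\dsym$ is the metric and the language's scalar operations are exact in any model) gives $c_\sigma^{\mathcal{A}} = c_{\sigma\cat(0)}^{\mathcal{A}} + c_{\sigma\cat(1)}^{\mathcal{A}}$, and applying the linear map $F$ yields $\phi(\sigma) = \phi(\sigma\cat(0)) + \phi(\sigma\cat(1))$, which is exactly the summative condition for a binary tree. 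For \emph{never zero}, I would invoke $\mathcal{A} \models \Gamma_\sigma$, i.e. $\norm{c_\sigma^{\mathcal{A}}} = 2^{-|\sigma|/r} > 0$; since $F$ is isometric, $\norm{\phi(\sigma)}_{\mathcal{B}} = 2^{-|\sigma|/r} \neq 0$.

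The substantive step is \emph{formal $L^r$-separation}: given incomparable $\nu_1, \ldots, \nu_n \in \{0,1\}^*$, I must show $\phi(\nu_1), \ldots, \phi(\nu_n)$ are $L^r$-formally disjointly supported. The sentences $\Psi_{\sigma,s,t}$ only directly encode the two-vector case for \emph{sibling} leaves $c_{\sigma\cat(0)}, c_{\sigma\cat(1)}$, so the plan is to bootstrap from there. Using summativity (Lemma \ref{lm:sum} applied to $\phi$), any node $\phi(\sigma)$ equals the sum of $\phi$ over the leaves of any finite subtree extending below $\sigma$; pushing $n$ incomparable nodes down to a common depth $m = \max_i |\nu_i|$ expresses each $\phi(\nu_i)$ as a sum of $\phi(\tau)$ over disjoint sets of depth-$m$ strings $\tau$. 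Then a repeated-bisection argument — at each internal node the two sibling subtrees are $L^r$-formally disjointly supported by $\Psi$, and this property is inherited by linear spans and composes across levels exactly as in the proof of Lemma \ref{lm:Tn.sat} — shows the full collection of depth-$m$ leaves is pairwise (indeed jointly) $L^r$-formally disjointly supported, and hence so are the groupings corresponding to $\nu_1, \ldots, \nu_n$. The one technical point to be careful about is that "$L^r$-formally disjointly supported" is a statement about \emph{all} scalar coefficients simultaneously, not just the two-vector two-coefficient case; so I would first record the elementary closure fact that if $X, Y$ are $L^r$-formally disjointly supported then so are $\langle X\rangle, \langle Y\rangle$, and that this relation is "associative" across a finite partition, then apply it inductively on the depth of the tree exactly as was done for $\mathcal{A}'$ in Lemma \ref{lm:Tn.sat}. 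That inductive bookkeeping is the main obstacle, and it is essentially a reprise of the argument already given there, now transported through $F$ into $\mathcal{B}$.
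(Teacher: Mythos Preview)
Your proposal is correct and follows essentially the same approach as the paper: read off summativity from $\Phi_{\sigma,1/2}$, the nonvanishing of $\phi(\sigma)$ from $\Gamma_\sigma$, and the sibling disjointness from $\Psi_{\sigma,s,t}$, all transferred through the isometric embedding $F$. The two write-ups emphasize complementary technicalities: the paper spells out the homogeneity rescaling needed because $\Psi_{\sigma,s,t}$ is only a sentence for $|s|+|t|\le 1$ (dividing by $2M$ and multiplying back), which you pass over; conversely, you are explicit that the definition of ``formally $L^r$-separating'' requires disjointness for arbitrary antichains, not merely sibling pairs, and you correctly outline the inductive reduction to the sibling case via summativity and Lemma~\ref{lm:sum}, whereas the paper simply asserts ``It follows that $\phi$ is separating'' after handling siblings.
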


\begin{proof}
Since $\mathcal{A} \models \Psi_{\sigma,1/2}$, and since $F$ is an isomorphism, it follows that $\phi$ is summative.

We now claim that $\phi(\sigma\cat(0))$ and $\phi(\sigma\cat(1))$ are $L^r$-formally disjointly supported. 
To see this, let $s,t \in \R$, and let
\begin{eqnarray*}
M & =& \max\{|s|,|t|,1\}\\
s' & = & s/(2M)\\
t' & = & t/(2M)
\end{eqnarray*}
Since $\mathcal{A} \models \Psi_{\sigma,s',t'}$, 
\[
\norm{s' c_{\sigma\cat(0)} + t' c_{\sigma\cat(1)}}^r = 
|s'|^r\norm{c_{\sigma\cat(0)}}^r + |t'|^r\norm{c_{\sigma\cat(1)}}^r.
\]
Since $F$ is an isomorphism, it follows that 
\[
\norm{s' x_{\sigma\cat(0)} + t' x_{\sigma\cat(1)}}^r = 
|s'|^r\norm{x_{\sigma\cat(0)}}^r + |t'|^r\norm{x_{\sigma\cat(1)}}^r.
\]
Now multiply by $(2M)^r$.   It follows that $\phi$ is separating.
\end{proof}

\begin{lemma}\label{lm:isom.Lr}
$\mathcal{B}_0$ is isometrically isomorphic to $L^r([0,1]; \R)$.  
\end{lemma}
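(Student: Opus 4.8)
The plan is to build an explicit $L^r$-formal disintegration of $L^r([0,1];\R)$ that is isomorphic to $\phi$, and then invoke Theorem \ref{thm:lifting}. First I would construct the canonical dyadic disintegration $\psi : \{0,1\}^* \to L^r([0,1];\R)$ by setting $\psi(\sigma) = \Ind_{I_\sigma}$, where $I_\sigma = [\,\nu(\sigma)/2^{|\sigma|}, (\nu(\sigma)+1)/2^{|\sigma|}\,)$ is the dyadic subinterval of $[0,1]$ coded by $\sigma$ in the obvious way, with $I_\emptyset = [0,1)$. One checks directly that $\psi$ is summative (each dyadic interval is the disjoint union of its two dyadic children, so $\Ind_{I_\sigma} = \Ind_{I_{\sigma\cat(0)}} + \Ind_{I_{\sigma\cat(1)}}$), that it is never zero, that incomparable $\sigma,\tau$ give disjointly supported indicator functions hence formally $L^r$-disjointly supported vectors, and that $\norm{\psi(\sigma)}_r = |I_\sigma|^{1/r} = 2^{-|\sigma|/r}$. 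The range of $\psi$ is linearly dense because dyadic step functions are dense in $L^r([0,1];\R)$; so $\psi$ is an $L^r$-formal disintegration of $L^r([0,1];\R)$.

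Next I would verify that $\phi$ is likewise an $L^r$-formal disintegration of $\mathcal{B}_0$ with the \emph{same} norm data, namely $\norm{\phi(\sigma)}_{\mathcal{B}} = 2^{-|\sigma|/r}$. Summativity and the formal-$L^r$-separation property are exactly Lemma \ref{lm:form.disint} (together with the fact, established there and in Lemma \ref{lm:sum}-style reasoning, that incomparable nodes — not merely sibling pairs — give formally $L^r$-disjointly supported vectors, which follows by writing each $\phi(\sigma)$ as the sum of the $\phi(\tau)$ over the leaves $\tau\sqsupseteq\sigma$ at a common level and using separation at that level). The norm identity $\norm{\phi(\sigma)}_{\mathcal{B}} = 2^{-|\sigma|/r}$ comes from $\mathcal{A}\models\Gamma_\sigma$ and the fact that $F$ is an isomorphism, hence isometric on the constants. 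In particular $\phi$ is never zero since $2^{-|\sigma|/r}>0$, and $\mathcal{B}_0$ is by definition the closed linear span of $\ran(\phi)$, so the linear-density requirement holds automatically.

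Then the identity map $\mathrm{id} : \{0,1\}^* \to \{0,1\}^*$ is an order isomorphism of $\dom(\phi) = \{0,1\}^*$ onto $\dom(\psi) = \{0,1\}^*$ satisfying $\norm{\psi(\mathrm{id}(\sigma))}_r = 2^{-|\sigma|/r} = \norm{\phi(\sigma)}_{\mathcal{B}}$ for every $\sigma$, i.e. it is an isomorphism of the two $L^r$-formal disintegrations in the sense defined before Theorem \ref{thm:lifting}. By Theorem \ref{thm:lifting} there is a (unique) isometric isomorphism $T$ of $\mathcal{B}_0$ onto $L^r([0,1];\R)$, which is the desired conclusion.

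The only genuinely delicate point is the passage from formal $L^r$-disjointness of \emph{sibling} pairs $\phi(\sigma\cat(0)),\phi(\sigma\cat(1))$ — which is what Lemma \ref{lm:form.disint} literally gives via the sentences $\Psi_{\sigma,s,t}$ — to formal $L^r$-disjointness of \emph{arbitrary incomparable} tuples, as required by the definition of an $L^r$-formal disintegration; I expect this to be the main obstacle, and I would handle it exactly as in the satisfiability argument for $T_n$ (Lemma \ref{lm:Tn.sat}): expand each $\phi(\nu_i)$ as $\sum_{\tau\sqsupseteq\nu_i,\ |\tau|=N}\phi(\tau)$ for $N$ large, note that the leaf sets attached to incomparable $\nu_i$ are pairwise disjoint, and deduce formal $L^r$-disjointness of the spans from repeated use of the sibling identities and Lemma \ref{lm:sum}, together with Lamperti-type stability of formal disjoint supports under linear combinations within complementary spans. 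Everything else is bookkeeping with dyadic intervals.
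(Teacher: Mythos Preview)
Your proposal is correct and follows essentially the same route as the paper: build the dyadic indicator disintegration $\psi(\sigma)=\Ind_{I_\sigma}$ of $L^r([0,1];\R)$, observe that $\mathcal{A}\models\Gamma_\sigma$ forces $\norm{\phi(\sigma)}_{\mathcal{B}}=2^{-|\sigma|/r}=\norm{\psi(\sigma)}_r$, so the identity on $\{0,1\}^*$ is an isomorphism of disintegrations, and invoke Theorem~\ref{thm:lifting}. Your added discussion of the passage from sibling disjointness to disjointness of arbitrary incomparable tuples is a legitimate point, but it is really a remark about the proof of Lemma~\ref{lm:form.disint} (which the paper does leave terse) rather than about the present lemma; once Lemma~\ref{lm:form.disint} is taken as given, your argument and the paper's coincide.
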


\begin{proof}
Let:
\begin{eqnarray*}
J_\emptyset & = & [0,1]\\
J_{\sigma\cat(0)} & = & [\min(I_\sigma), \frac{1}{2}(\min(I_\sigma) + \max(I_\sigma))]\\
J_{\sigma\cat(1)} & = & [\frac{1}{2}(\min(I_\sigma) + \max(I_\sigma)), \max(I_\sigma)]
\end{eqnarray*}
Let $\psi(\sigma) = \Ind_{J_\sigma}$.  Thus, $\psi$ is an $L^r$-formal disintegration of $L^r([0,1];\R)$.  Since $\mathcal{A} \models \Gamma_\sigma$, it follows that the identity map on $\{0,1\}^*$ is an isomorphism of $\psi$ with $\phi$.  
So, by Theorem \ref{thm:lifting}
this isomorphism induces an isometric isomorphism of 
$L^r([0,1]; \R)$ with $\mathcal{B}_0$.  
\end{proof}

\begin{lemma}\label{lm:B0.embeds}
$\mathcal{B}_0$ isometrically embeds in $L^p([0,1]; \R)$.  
\end{lemma}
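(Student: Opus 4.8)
The plan is to deduce this from the separability of $\mathcal{B}_0$ together with the structure theory for separable $L^p$ spaces recorded in Section \ref{sec:bak.prelim.fa}, namely Lemma \ref{lm:sep.sub} and Theorem \ref{thm:sep.embed}. First I would observe that $\mathcal{B}_0$ is a separable Banach space: it is by definition the closed linear span of $\ran(\phi)$, and $\ran(\phi) = \{\phi(\sigma) : \sigma \in \{0,1\}^*\}$ is countable, so the finite rational-linear combinations of vectors in $\ran(\phi)$ form a countable dense subset of $\mathcal{B}_0$.

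Next I would use that, by our standing assumption (via Theorem \ref{thm:kakutani}), $\mathcal{B}$ is an honest real $L^p$ space, say $\mathcal{B} = L^p(\Omega;\R)$, and that $\mathcal{B}_0$ is a separable subspace of it. Applying Lemma \ref{lm:sep.sub} with $X = \mathcal{B}_0$ produces a separable subspace $\Omega_0$ of $\Omega$ with $\mathcal{B}_0 \subseteq L^p(\Omega_0;\R)$; since the canonical inclusion $L^p(\Omega_0;\R) \hookrightarrow L^p(\Omega;\R)$ is isometric, this exhibits an isometric embedding of $\mathcal{B}_0$ into $L^p(\Omega_0;\R)$. Because $\Omega_0$ is a separable measure space, $L^p(\Omega_0;\R)$ is a separable member of $L^p(\R)$ (a countable family $\mathcal{D}$ witnessing separability of $\Omega_0$ yields, through simple functions with rational coefficients built from sets in $\mathcal{D}$, a countable dense subset), so Theorem \ref{thm:sep.embed} gives an isometric embedding of $L^p(\Omega_0;\R)$ into $L^p([0,1];\R)$. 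Composing the two isometric embeddings yields the desired isometric embedding of $\mathcal{B}_0$ into $L^p([0,1];\R)$.

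I do not expect a serious obstacle: the argument is a routine chaining of the inclusion and classification results already set up. The one place that deserves an explicit line is the verification that $L^p(\Omega_0;\R)$ is separable as a Banach space, so that Theorem \ref{thm:sep.embed} is applicable — but this follows immediately from the definition of a separable measure space. (Together with Lemma \ref{lm:isom.Lr}, this lemma completes the real case of Theorem \ref{thm:main}, since it gives $L^r([0,1];\R) \cong \mathcal{B}_0 \hookrightarrow L^p([0,1];\R)$.)
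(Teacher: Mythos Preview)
Your proposal is correct and follows essentially the same route as the paper: invoke Lemma~\ref{lm:sep.sub} to pass from $\mathcal{B}_0\subseteq L^p(\Omega;\R)$ to a separable $\Omega_0$, note that $L^p(\Omega_0;\R)$ is separable, and then apply Theorem~\ref{thm:sep.embed}. The only difference is that you spell out the separability of $\mathcal{B}_0$ and of $L^p(\Omega_0;\R)$ in slightly more detail than the paper does.
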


\begin{proof}
Let $\mathcal{B} = L^p(\Omega; \R)$.  It follows from Lemma \ref{lm:sep.sub} that there is a separable subspace $\Omega_0$ of $\Omega$ so that $\mathcal{B}_0 \subseteq L^p(\Omega_0; \R)$.  
Since $\Omega_0$ is separable, so is $L^p(\Omega_0; \R)$.  Thus, by Theorem \ref{thm:sep.embed}, $L^p(\Omega_0; \R)$ isometrically embeds in $L^p([0,1]; \R)$.
\end{proof}

The complex case is not much different.  One adds two families $\{c_\sigma\}_{\sigma \in \{0,1\}^*}$, $\{d_\sigma\}_{\sigma \in \{0,1\}^*}$ of new and distinct constant symbols to $\LLpC$.  Then, to $\TLpC$ one adds sentences that altogether say $\sigma \mapsto (c_\sigma, d_\sigma)$ is a formal $L^r$-disintegration.  

\section{Conclusion}\label{sec:conclusion}

Our first finding is a new proof of Theorem \ref{thm:BDCK} by means of the compactness theorem of continuous logic.  
When forming $\LB$ and $\LBL$, there is no harm in restricting scalars to rational scalars.  
Thus, we only need the compactness theorem for countable languages.  
I.Goldbring has essentially given a constructive proof of the compactness theorem for continuous logic by adapting the Henkin construction to the continuous setting \cite{Goldbring.2018}; see also \cite{Calvert.2011}.  Thus the given proof does not make use of ultraproducts or the axiom of choice either explicitly or implicitly.  

We have also characterized complex $L^p$ spaces in the framework of Banach lattices and 
we have used this result to show how to represent complex $L^p$ spaces in the framework of continuous logic.  The combination of these results, together with the required material from the theory of complex $r$-stable random variables, then allows us to easily extend Theorem \ref{thm:BDCK} to the complex case; i.e. Theorem \ref{thm:main}.

\section*{Acknowledgement}

The author was supported in part by Simons Foundation Grant \# 317870.  

\def\cprime{$'$}
\providecommand{\bysame}{\leavevmode\hbox to3em{\hrulefill}\thinspace}
\providecommand{\MR}{\relax\ifhmode\unskip\space\fi MR }
\providecommand{\MRhref}[2]{%
  \href{http://www.ams.org/mathscinet-getitem?mr=#1}{#2}
}
\providecommand{\href}[2]{#2}

\section*{Appendix}

The following appears to be a matter of folklore in the theory of Banach lattices.

\begin{theorem}\label{thm:compl.Lp}
The complexification of $L^p(\Omega; \R)$ is $L^p(\Omega; \C)$. 
\end{theorem}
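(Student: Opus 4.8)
The plan is to exhibit an explicit isometric isomorphism between the complexification $(L^p(\Omega;\R))_\C$ and $L^p(\Omega;\C)$ and to show it is compatible with the complex vector space structure and the norm built from the modulus. First I would define the map $\Psi : (L^p(\Omega;\R))_\C \to L^p(\Omega;\C)$ by $\Psi(f_0 + i f_1) = f_0 + i f_1$, where on the right the sum is taken pointwise so that $\Psi(f_0+if_1)$ is the function $t \mapsto f_0(t) + i f_1(t)$. One checks immediately from the definition of complexification that $\Psi$ is additive and that it respects scalar multiplication: for $x + iy \in \C$, applying the formula $(x+iy)(v_0,v_1) = (xv_0 - yv_1, yv_0 + xv_1)$ on the left corresponds exactly to pointwise multiplication of $f_0(t) + if_1(t)$ by $x + iy$ on the right. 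So $\Psi$ is a $\C$-linear bijection of the underlying vector spaces, with inverse sending $g \in L^p(\Omega;\C)$ to $(\Re g, \Im g)$.

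The substance is showing $\Psi$ is isometric, i.e. that the norm on $(L^p(\Omega;\R))_\C$ obtained via the modulus construction $|\bfv| = \sup_\theta \Re(e^{i\theta}\bfv)$ and $\norm{\bfv} = \norm{|\bfv|}_p$ agrees with $\norm{\Psi(\bfv)}_p = \left(\int_X |f_0 + if_1|^p \, d\mu\right)^{1/p}$. For this I would first identify the modulus pointwise. For $\bfv = (f_0,f_1)$ and a real $\theta$, $\Re(e^{i\theta}\bfv)$ is the element $(\cos\theta) f_0 - (\sin\theta) f_1$ of $L^p(\Omega;\R)$, which as a function is $t \mapsto \Re(e^{i\theta}(f_0(t)+if_1(t)))$. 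The key lattice-theoretic step is to verify that $\sup_\theta \big((\cos\theta) f_0 - (\sin\theta) f_1\big)$, taken in the Banach lattice order of $L^p(\Omega;\R)$, is the function $t \mapsto \sup_\theta \Re(e^{i\theta}(f_0(t)+if_1(t))) = |f_0(t) + if_1(t)|$ (the ordinary complex modulus). The nontrivial direction is that this pointwise supremum over the continuum of angles $\theta$ is actually the least upper bound in the order-completeness sense of the lattice: since the supremum at each point $t$ is already attained by the single angle $\theta(t) = -\Arg(f_0(t)+if_1(t))$, one can restrict to a countable dense set of $\theta$'s, for which the pointwise sup is a genuine measurable function and is easily seen to be the lattice supremum. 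Once the modulus is identified as the pointwise complex modulus $|f_0 + if_1|$, we get $\norm{\bfv} = \norm{\,|f_0+if_1|\,}_p = \norm{\Psi(\bfv)}_p$, so $\Psi$ is an isometry.

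I expect the main obstacle to be the order-theoretic verification that the supremum defining $|\bfv|$ exists in the lattice and equals the pointwise complex modulus — one must be careful about uncountable suprema in $L^p$ and reduce to a countable subfamily, using that $L^p(\Omega;\R)$ is Dedekind complete and that the relevant supremum is attained pointwise. Everything else (linearity, the scalar multiplication identity, density/surjectivity via simple functions) is routine bookkeeping parallel to the argument already given in the proof of Theorem \ref{thm:complex.abstract.Lp}.
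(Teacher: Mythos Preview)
Your proposal is correct and essentially matches the paper's proof: both reduce the statement to showing that the lattice supremum $\sup_\theta \Re(e^{i\theta}f)$ in $L^p(\Omega;\R)$ coincides with the pointwise complex modulus $t\mapsto|f(t)|$, and both handle the uncountable family of angles by passing to a countable subfamily. The only difference is in how that reduction is justified---the paper restricts to the $\sigma$-finite support of $f$ and invokes super Dedekind completeness of $L^p$ to extract a countable $D$ realizing the lattice sup, whereas you use continuity of $\theta\mapsto\Re(e^{i\theta}z)$ to pass directly to a fixed countable dense set of angles; your route is marginally more elementary (and in fact does not really need Dedekind completeness, since you exhibit the least upper bound explicitly), but the two arguments are otherwise interchangeable.
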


We give a proof which was suggested by J. Gl\"uck whose thesis contains a number of results on complex Banach lattices \cite{Glueck.2016}.  

A Banach lattice is \emph{Dedekind complete} if each nonempty set of vectors that is bounded above has a supremum.  A Banach lattice is \emph{super Dedekind complete} if it has the property that whenever $X$ is a nonempty set of vectors that is bounded above, there is a countable $D \subseteq X$ so that $\sup X = \sup D$.   It is well-known that $L^p$ spaces are Dedekind complete and that $\sigma$-finite $L^p$ spaces are super Dedekind complete.  

Now, let $f \in L^p(\Omega; \C)$, and let $g(t) = |f(t)|$.  It suffices to show that 
$g = \sup_\theta \Re(e^{i \theta} f)$.  Let $h = \sup_\theta \Re(e^{i \theta} f)$.  We first note that for each complex number $z$, 
$z = \sup_\theta \Re(e^{i \theta} z)$.   It follows that $g \geq h$.  Suppose $g_0 \geq h$.  
Since the support of $f$ is a $\sigma$-finite set, we can assume $\Omega$ is $sigma$-finite.  Thus, $L^p(\Omega; \R)$ is super Dedekind complete, and so there is a countable set of reals $D$ so that $h = \sup_{\theta \in D} \Re(e^{i \theta} f)$; we can additionally assume that $D$ is dense.  
Now, for each $\theta \in D$, $g_0(t) \geq \Re(e^{i \theta}f(t))$ a.e..  Since $D$ is countable, 
it follows that $g_0(t) \geq \sup_{\theta \in D} \Re(e^{i \theta} f(t))$ a.e..  Thus, 
since $D$ is dense, $g_0(t) \geq g(t)$ a.e..  We conclude that $g = h$.  

\end{document}